\date{\today}
\newtheorem{thm}{Theorem}[section]
\newtheorem{cor}[thm]{Corollary}
\newtheorem{prop}[thm]{Proposition}
\theoremstyle{definition}
\theoremstyle{remark}
\newtheorem{rem}[thm]{Remark}
\numberwithin{equation}{section}
\newcommand{\R}{\mathbb R}
\newcommand{\He}{\mathbb H}
\newcommand{\C}{{\mathbb C}}
\renewcommand{\Re}{\operatorname{Re}}
\renewcommand{\Im}{\operatorname{Im}}
\newcommand{\tr}{\operatorname{tr}}
\title[Representations of  Heisenberg groups]
{Algebras of entire functions and \\
 representations of the twisted Heisenberg group}
\author[ S. Thangavelu]{ Sundaram Thangavelu}
\address[S. Thangavelu]{Department of Mathematics, Indian Institute of Science, Bangalore--560012, India.}
\email{veluma@iisc.ac.in}
\begin{document}

\maketitle

\begin{center} \it{Dedicated to the memory of Professor K. R. Parthasarathy\\}
\end{center}
\vskip0.25in

\begin{abstract} On the twisted Fock spaces $ \mathcal{F}^\lambda(\C^{2n}) $ we consider a family of unitary operators $\rho_\lambda(a,b) $ indexed by $ (a,b) \in \C^n \times \C^n.$  The composition formula for $ \rho_\lambda(a,b) \circ \rho_\lambda(a^\prime,b^\prime) $ leads us to a group $ \He^n_\lambda(\C) $ which contains two copies of the Heisenberg group $ \He^n.$ The operators $ \rho_\lambda(a,b) $ lift to $ \He_\lambda^n(\C) $ providing  an irreducible unitary representation. However, its restriction to $ \He^n_\lambda(\R) $ is not irreducible.
 \end{abstract}


\section{Introduction} \label{Sec-intro}

We begin by recalling some well known facts about the Heisenberg groups $ \He^n $ and their representations. The group $  \He^n$ is  just $ \R^n \times \R^n \times \R $ (or equivalently $ \C^n \times \R $) equipped with the group law
$$ (x,y,t)(u,v,s) = (x+u, y+v, t+s+\frac{1}{2}(u \cdot y-x \cdot v)).$$
Note that $ \Re(\He^n) = \{ (x,0,t): x \in \R^n\, t \in \R\} $ and $ \Im(\He^n) = \{ (0,y,t): y \in \R^n, t \in \R \} $ are two subgroups of $ \He^n $ each being isomorphic to $ \R^{n+1}.$
For each $ \lambda \in \R, \lambda \neq 0,$ there is an irreducible unitary representation $ \pi_\lambda $ of $ \He^n$ on $ L^2(\R^n) $ explicitly given by
$$ \pi_\lambda(x,y,t)f(\xi) = e^{i\lambda t} e^{i\lambda(x \cdot \xi +\frac{1}{2} x\cdot y)} f(\xi+y),\,\,\, f \in L^2(\R^n).$$
By setting $ \pi(x,y) = \pi_1(x,y,0) $ we observe that the irreducibility of $ \pi_1 $ has the following consequence: if a bounded linear operator $ T $ on $ L^2(\R^n) $ commutes with $ \pi(x,y)$ for all $ x,y \in \R^n$ then $ T = cI $ for a constant $ c.$ However, there are non trivial operators that commute either with  the family  $ \pi(x,0) $ or with $ \pi(0,y).$\\

Observe that $ \pi(0,y)f(\xi) = f(\xi+y) $ are the translations and $ \pi(x,0)f(\xi) = e^{i x\cdot \xi}f(\xi) $ are the modulations. It is well known that any operator $ T $ that commutes with all translations $ \pi(0,y) $ is of the form $ T_m $ where
$$ T_mf(x) = (2\pi)^{-n/2} \int_{\R^n} e^{i x \cdot \xi}\, m(\xi)\,  \widehat{f}(\xi)\, d\xi $$
for some $ m \in L^\infty(\R^n).$ Similarly,  bounded linear operators $ S $ that commute with all modulations $ \pi(x,0) $ are of the form $ S_m f(x)= m(x) f(x) $ where $ m \in L^\infty(\R^n).$ Thus we see that though $ \pi_1 $ is  irreducible for $ \He^n $ its restriction to neither of the subgroups $ \Re(\He^n) $ or $ \Im(\He^n) $ is irreducible. In this article, we  introduce a group $ G_n $ which contains two copies of the Heisenberg group $ \He^n$  as subgroups. We  describe a family of irreducible representations $ \rho_\lambda, \lambda \neq 0 $ and show that the restriction of $ \rho_\lambda $ to each copy of the Heisenberg subgroup is reducible.
\\

The representations $ \rho_\lambda $ of the group $ G_n $ are realised on a Hilbert space of entire functions, namely the twisted Fock spaces. Recall that in the case of $ \He^n $ the Schr\"odinger representation $ \pi_1 $ is unitarily equivalent to the Fock space representation $ \rho_0 $ realised on the Fock space $ \mathcal{F}(\C^n) $ consisting of entire functions on $ \C^n $ that are square integrable with respect to the weight function $ w_0(w) = c_0\, e^{-\frac{1}{2}|w|^2}.$ The unitary operator intertwining $ \pi_1 $ and $ \rho_0 $ is given by the Bargmann transform $ B: L^2(\R^n) \rightarrow \mathcal{F}(\C^n)$ defined by
$$ Bf(z) = e^{-\frac{1}{4}z^2} \, \int_{\R^n} f(\xi) e^{-\frac{1}{2}|\xi|^2}\, e^{z \cdot \xi}\, d\xi.$$
Thus for $ w = u+iv \in \C^n,  \rho_0(w) = B \circ \pi(v,u) \circ B^\ast $ which is explicitly given by
$$ \rho_0(w)F(z) = e^{-\frac{1}{4}|w|^2}\, F(z+w) \, e^{-\frac{1}{2} z\cdot \bar{w}}.$$
As $ \rho_0 $ is irreducible, there are no bounded linear operators on $ \mathcal{F}(\C^n) $ other than constant multiples of the identity that commute with $ \rho_0(w) $ for all $ w \in \C^n.$ The problem of characterising all bounded linear operators $ T $ that commute with $ \rho_0(a), a \in \R^n $ (or with $ \rho_0(ib), b \in \R^n $) has  received some attention  in recent years, see  \cite{CLSWY, CHLS}.
\\

It can be easily seen that any bounded linear operator on the Fock space commuting with all $ \rho_0(a), a \in \R^n $ is of the form  $S_\varphi$ for some $\varphi \in \mathcal{F}(\C^n)$ where
\begin{equation}\label{op-trans} S_\varphi F(z) = \int_{\C^n} F(w) \varphi(z-\bar{w}) e^{\frac{1}{2} z \cdot \bar{w}} \, e^{-\frac{1}{2}|w|^2} dw. 
\end{equation} 
These operators are densely defined and there is a simple necessary and sufficient condition on $ \varphi $ for the operators to be bounded.
Let $ G: L^\infty(\R^n) \rightarrow \mathcal{F}(\C^n) $ be the Gauss-Bargmann transform defined by
\begin{equation}\label{gauss-barg-abelian} 
Gm(z) = e^{\frac{1}{4}z^2} \, \int_{\R^n} m(\xi) e^{i z\cdot \xi}\, e^{-|\xi|^2} \, d\xi.
\end{equation}
Answering a question raised by Zhu \cite{Z},  the authors of the work \cite{CLSWY} have proved that $ S_\varphi $ is bounded on $ \mathcal{F}(\C^n) $ if and only if $\varphi = G m$ for some $ m \in L^\infty(\R^n).$ 
We also have a characterisation of bounded linear operators on the Fock space that commute with $ \rho_0(ib),  b \in \R^n.$ These are of the form
\begin{equation}\label{op-mod}
 \widetilde{S}_\varphi F(z) = \int_{\C^n} F(w) \varphi(z+\bar{w}) e^{\frac{1}{2} z \cdot \bar{w}} \, e^{-\frac{1}{2}|w|^2} dw 
 \end{equation}
for a suitable $ \varphi \in \mathcal{F}(\C^n).$ These two families of operators are related via the identity
$$  \widetilde{S}_\varphi  = U \circ S_{U^\ast \varphi} \circ U^\ast $$
where $ U $ is the unitary operator on $ \mathcal{F}(\C^n) $ defined by $ UF(z) = F(-iz).$
Hence we infer that $ \widetilde{S}_\varphi $ is bounded if and only if $ U^\ast \varphi =G(m) $ for a bounded function $m.$ Some generalisation of this result has been obtained in the recent work \cite{BD}.\\

In an earlier work \cite{Thangavelu-arxiv-Fock-Sobolev-2023} we have proved that for any non constant $ \varphi \in \mathcal{F}(\C^n) $ at least one of the operators $ S_\varphi $ or $ \widetilde{S}_\varphi $ is unbounded. Interestingly, this was proved as a consequence of Hardy's theorem for the Fourier transform on $ \R^n.$
Yet another consequence of these considerations is a new proof of the well known fact that the projective representation $ \rho_0 $ of $ \C^n$ is irreducible. Indeed, if a bounded linear operator $ T $ on $ \mathcal{F}(\C^n) $ commutes with all $ \rho_0(w), w \in \C^n$ then there are functions $ \varphi, \psi \in \mathcal{F}(\C^n) $ such that $ T = S_\varphi = \widetilde{S}_\psi.$ As $ S_\varphi 1 =\varphi$ and $ \widetilde{S}_\psi 1 = \psi $ it follows that $ \varphi = \psi $ and hence $ S_\varphi = \widetilde{S}_\varphi.$ By what we have just mentioned, this is possible only if $ \varphi $ is a constant and hence $ T $ reduces to a constant multiple of the identity. This proves the irreducibility of $ \rho_0.$
\\

We note that in the above proof we haven't made full use of the equality $ S_\varphi = \widetilde{S}_\psi $ but only the fact that $ S_\varphi 1 = \widetilde{S}_\psi 1.$ We can also replace the constant function by the monomials $ \zeta_\alpha(w) = w^\alpha $ for a multi-index $ \alpha.$

\begin{thm}\label{hardy} Suppose $ T $ and $ S $ are bounded linear operators on $ \mathcal{F}(\C^n) $ where $ T $ commutes with $ \rho_0(a), a \in \R^n $ whereas $ S $ commutes with $ \rho_0(ib) $ for all $ b \in \R^n.$ If $ T\zeta_\alpha = S \zeta_\alpha $ for some $ \alpha ,$  then $ T=S=c I $ for some constant $ c.$
\end{thm}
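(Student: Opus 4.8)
The plan is to transport the hypothesis $ T\zeta_\alpha = S\zeta_\alpha $ to $ L^2(\R^n) $ through the Bargmann transform and then conclude by Hardy's uncertainty principle, exactly in the spirit of the argument recalled above for $ \alpha = 0 $. Recall that $ B $ intertwines $ \pi $ with $ \rho_0 $ and carries the Hermite function $ \Phi_\alpha $ to a nonzero constant multiple of $ \zeta_\alpha $. Hence $ B^\ast T B $ and $ B^\ast S B $ are bounded on $ L^2(\R^n) $, the first commuting with all translations $ \pi(0,y) $ and the second with all modulations $ \pi(x,0) $; so by the facts recalled in the introduction $ B^\ast T B = T_m $ is a Fourier multiplier for some $ m \in L^\infty(\R^n) $ and $ B^\ast S B $ is multiplication by some $ m' \in L^\infty(\R^n) $. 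Conjugating, the hypothesis $ T\zeta_\alpha = S\zeta_\alpha $ becomes $ T_m\Phi_\alpha = m'\Phi_\alpha $. (One could instead stay on the Fock side: write $ T = S_\varphi $, $ S = \widetilde S_\psi $, compute $ S_\varphi\zeta_\alpha $ and $ \widetilde S_\psi\zeta_\alpha $ by expanding $ \varphi(z\mp\bar w) $ in powers of $ \bar w $ and using $ \bar w^\beta e^{\frac12 z\cdot\bar w} = 2^{|\beta|}\partial_z^\beta e^{\frac12 z\cdot\bar w} $ together with the reproducing formula, then insert $ \varphi = Gm $ and $ U^\ast\psi = Gm' $; this leads to the same relation.)

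Next I would pass to the Fourier transform. Using $ \widehat{\Phi_\alpha} = (-i)^{|\alpha|}\Phi_\alpha $, the identity $ T_m\Phi_\alpha = m'\Phi_\alpha $ becomes
\begin{equation*}
 m(\xi)\,\Phi_\alpha(\xi) \;=\; i^{|\alpha|}\,\widehat{m'\Phi_\alpha}(\xi)\qquad\text{a.e.}
\end{equation*}
Write $ \Phi_\alpha(\xi) = p_\alpha(\xi)\,e^{-|\xi|^2/2} $ with $ p_\alpha $ the tensor Hermite polynomial, $ \deg p_\alpha = |\alpha| $, and set $ g := m'\Phi_\alpha \in L^1(\R^n)\cap L^2(\R^n) $. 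Since $ m $ and $ m' $ are bounded, the displayed identity gives
\begin{equation*}
 |g(\xi)| \le C\,(1+|\xi|)^{|\alpha|}e^{-|\xi|^2/2}, \qquad |\widehat g(\xi)| = |m(\xi)\Phi_\alpha(\xi)| \le C\,(1+|\xi|)^{|\alpha|}e^{-|\xi|^2/2};
\end{equation*}
so $ g $ and $ \widehat g $ both have Gaussian decay of the same, critical, width.

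Now apply Hardy's uncertainty principle in the form permitting polynomial factors: an $ L^2 $ function $ g $ with $ g $ and $ \widehat g $ both dominated by $ C(1+|\cdot|)^N e^{-|\cdot|^2/2} $ must be of the form $ g(\xi) = Q(\xi)e^{-|\xi|^2/2} $ for a polynomial $ Q $. Thus $ m'(\xi)p_\alpha(\xi) = Q(\xi) $, so $ m' $ agrees a.e.\ with the rational function $ Q/p_\alpha $. Since $ p_\alpha $ is a product of one-variable Hermite polynomials, each having only real, simple zeros, its zero set is a union of distinct hyperplanes; essential boundedness of $ Q/p_\alpha $ near each of them forces the corresponding linear form to divide $ Q $, hence $ p_\alpha \mid Q $, and $ m' = Q/p_\alpha $ is a polynomial. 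A polynomial lying in $ L^\infty(\R^n) $ is constant, so $ m' $ is constant, and then the displayed identity shows $ m $ is constant too. Finally, a constant symbol $ m $ yields a constant $ \varphi = Gm $ (the Gaussian integral $ \int_{\R^n}e^{iz\cdot\xi}e^{-|\xi|^2}\,d\xi = \pi^{n/2}e^{-z^2/4} $ cancels the factor $ e^{z^2/4} $ in $ G $), so $ T = S_\varphi = c_1 I $; similarly $ S = c_2 I $, and $ T\zeta_\alpha = S\zeta_\alpha $ forces $ c_1 = c_2 =: c $.

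The main obstacle is the last two steps. First, one must extract from the \emph{boundedness} of both $ T $ and $ S $ that the single function $ g = m'\Phi_\alpha $ has itself \emph{and} its Fourier transform decaying like the same Gaussian $ e^{-|\xi|^2/2} $ (up to polynomial factors) — this is precisely what puts us in the critical, equality case of Hardy's theorem and produces the polynomial factor $ Q $. Second, one must argue that an $ L^\infty $ function of the shape $ Q/p_\alpha $, with $ p_\alpha $ a Hermite polynomial, is necessarily constant; here it is essential that Hermite polynomials have all their zeros real and simple, so that boundedness forces $ p_\alpha\mid Q $.
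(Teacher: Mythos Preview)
Your argument is correct and follows exactly the route the paper indicates: it says the general case ``is proved similarly using a stronger version of Hardy's theorem'' and then omits the details, and what you have written is precisely that stronger-Hardy argument carried out in full (transferring to $L^2(\R^n)$ via the Bargmann transform, applying the polynomial-prefactor Hardy theorem to $g=m'\Phi_\alpha$, and then using the simple real zeros of the Hermite polynomials to force $p_\alpha\mid Q$). The only cosmetic difference is that the paper's cited earlier work stays on the Fock side with $S_\varphi$, $\widetilde S_\psi$ and the Gauss--Bargmann transform $G$, whereas you conjugate to $L^2(\R^n)$; but you note this equivalence yourself, and the content is the same.
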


Thus we see that Hardy's theorem is apparently stronger than the irreducibility of $ \rho_0.$  It is not known if they are equivalent, though we believe they are not. However, we can prove the irreducibility of $ \rho_0 $ without recourse to Hardy's theorem. In the following we give another proof of irreducibility which again does not require the full power of the hypothesis.

\begin{thm}\label{weaker-hardy} Suppose $ T $ and $ S $ are as in the previous theorem. If $ T\zeta_\alpha = S \zeta_\alpha $ for all $ |\alpha| \leq 1,$  then $ T= S = c I $ for some constant $ c.$
\end{thm}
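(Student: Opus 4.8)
The plan is to turn the two commutation hypotheses into explicit formulas for how $T$ and $S$ act on the exponential (coherent‑state) functions, and then differentiate those formulas in the parameter to read off the action on the degree‑one monomials $\zeta_{e_j}(z)=z_j$. First I would set $\varphi=T1$ and $\psi=S1$, which lie in $\mathcal F(\C^n)$ since $T,S$ are bounded and $1\in\mathcal F(\C^n)$. For $a\in\R^n$ put $E_a(z)=e^{-\frac12 z\cdot a}$, so that the explicit formula for $\rho_0$ gives $\rho_0(a)1=e^{-\frac14|a|^2}E_a$ and $\rho_0(a)\varphi(z)=e^{-\frac14|a|^2}\varphi(z+a)e^{-\frac12 z\cdot a}$. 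Applying the relation $T\rho_0(a)=\rho_0(a)T$ to the constant function $1$ and cancelling the common nonzero scalar yields
\[
(TE_a)(z)=\varphi(z+a)\,e^{-\frac12 z\cdot a},\qquad a\in\R^n .
\]
In the same way, with $\widetilde E_b(z)=e^{\frac i2 z\cdot b}$ for $b\in\R^n$, applying $S\rho_0(ib)=\rho_0(ib)S$ to $1$ gives $(S\widetilde E_b)(z)=\psi(z+ib)\,e^{\frac i2 z\cdot b}$. (Equivalently, these are just the identities $T=S_\varphi$, $S=\widetilde S_\psi$ from the characterizations recalled above, evaluated on the exponentials; deriving them afresh from the commutation relations keeps the argument self‑contained.)

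Next I would differentiate these two identities in the parameter at the origin. The map $a\mapsto E_a$ is norm‑differentiable from $\R^n$ into $\mathcal F(\C^n)$, with $\partial_{a_j}E_a\big|_{a=0}=-\tfrac12\zeta_{e_j}$ and $E_0=1$; this comes down to a routine estimate showing the difference quotients are dominated in $\mathcal F(\C^n)$ by a fixed function (of the form $c\,|z_j|^2e^{|z_j|/2}$), so dominated convergence applies. Since $T$ is bounded it commutes with this derivative, and since point evaluations are continuous on $\mathcal F(\C^n)$ the value $(TE_a)(z)$ may be differentiated in $a$ by ordinary calculus from the explicit formula above; comparing the two computations gives
\[
T\zeta_{e_j}=\zeta_{e_j}\varphi-2\,\partial_j\varphi ,\qquad S\zeta_{e_j}=\zeta_{e_j}\psi+2\,\partial_j\psi ,\qquad j=1,\dots,n,
\]
the second identity by the same computation applied to $\widetilde E_b$ (with its $i$‑twisted parameter).

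To conclude: the hypothesis at $\alpha=0$ says exactly $T1=S1$, i.e.\ $\varphi=\psi$; feeding this in, the hypotheses $T\zeta_{e_j}=S\zeta_{e_j}$ for $|\alpha|=1$ become $-4\,\partial_j\varphi=0$ for every $j$, so $\varphi=\psi$ is a constant $c$. Then $TE_a=cE_a$ for all $a\in\R^n$. Since $E_a$ equals the reproducing kernel $K_{-a}$ of $\mathcal F(\C^n)$ at the point $-a\in\R^n$, we have $\langle F,E_a\rangle=F(-a)$; as an entire function vanishing on $\R^n$ is identically zero, the linear span of $\{E_a:a\in\R^n\}$ is dense in $\mathcal F(\C^n)$, and boundedness of $T$ forces $T=cI$. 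The identical argument applied to $S$ and the functions $\widetilde E_b=K_{-ib}$ (reproducing kernels at the points of $i\R^n$) gives $S=cI$. (Alternatively, once $\varphi$ is constant one may simply invoke $T=S_\varphi$ together with the fact, used in the irreducibility argument recalled above, that $S_{\mathrm{const}}$ is a scalar multiple of the identity.)

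The only genuinely technical point is the interchange of the bounded operators $T,S$ with the parameter‑differentiation — concretely, the norm‑differentiability of $a\mapsto E_a$ at the origin and the legitimacy of differentiating $(TE_a)(z)$ pointwise in $z$ — which is a soft Gaussian‑integral estimate and presents no real difficulty. Everything else is bookkeeping; the one place to stay alert is the signs in the two differentiations, since the whole proof turns on the derivative term appearing with a plus sign in $S\zeta_{e_j}$ and a minus sign in $T\zeta_{e_j}$.
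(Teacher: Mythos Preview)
Your proof is correct and follows essentially the same strategy as the paper's: from $T1=S1$ deduce $\varphi=\psi$, then compute the action of $T$ and $S$ on the degree-one monomials and observe that the $\partial_j\varphi$ terms enter with opposite signs, forcing $\partial_j\varphi=0$ for all $j$. The only difference is tactical: the paper obtains the formula $S_\varphi\zeta_j=(-\partial_{z_j}+\tfrac12 z_j)\varphi$ by a change of variables in the integral operator $S_\varphi$ (after quoting the characterization $T=S_\varphi$, $S=\widetilde S_\psi$ from \cite{Thangavelu-arxiv-Fock-Sobolev-2023}), whereas you derive the equivalent identity by applying the commutation relation to the constant function and then differentiating the resulting action on the coherent states $E_a$---a slightly more self-contained route that avoids invoking the external characterization.
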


We  prove these two theorems in Section 2.1 using some properties of the Gauss-Bargmann transform.
Generalising the Fock spaces, we now consider the following family of Fock spaces $ \mathcal{F}^\lambda(\C^{2n}) $ indexed by $ \lambda \in \R $ which coincide with the classical Fock space $\mathcal{F}(\C^{2n})$ when  $ \lambda =0.$ These spaces are related  to the twisted Bergman spaces $ \mathcal{B}_t^\lambda(\C^{2n}) $ studied in \cite{KTX} which appear naturally in connection with the heat kernel transform or the Segal-Bargmann transform on the Heisenberg group $ \He^n.$ Let us consider the weight function
$$ w_\lambda(z,w) = c_\lambda\, e^{\lambda \Im( z\cdot \bar{w})} \, e^{-\frac{1}{2} \lambda (\coth \lambda) |(z,w)|^2},\,\,\, (z,w) \in \C^{2n}.$$
Then $ \mathcal{F}^\lambda(\C^{2n}) $ is defined as the space of all entire functions $ F $ on $\C^{2n} $ which are square integrable with respect to the measure $ w_\lambda(z,w) \, dz \, dw.$ We equip $ \mathcal{F}^\lambda(\C^{2n}) $ with the norm
$$ \|F \|_{\mathcal{F}^\lambda}^2 = \int_{\C^{2n}} |F(z,w)|^2 \, w_\lambda(z,w) \, dz\, dw. $$ 
Note that when $ \lambda =0, \, w_0(z,w) = c_0\, e^{-\frac{1}{2}  |(z,w)|^2}$ and hence  $ \mathcal{F}^0(\C^{2n}) = \mathcal{F}(\C^{2n}),$ the standard Fock space of entire functions on $ \C^{2n} $ which are square integrable with respect to the Gaussian measure $ d\nu(z,w) = w_0(z,w) \, dz \, dw = c_0\, e^{-\frac{1}{2} |(z,w)|^2} \, dz \, dw.$\\

On the twisted Fock spaces $ \mathcal{F}^\lambda(\C^{2n}) $ we consider the following family of operators $ \rho_\lambda(a,b)$ indexed by  $(a,b) \in \C^{2n}:$
$$
\rho_\lambda(a,b)F(z,w) = e^{\frac{\lambda}{2} \Im (a\cdot \bar{b})}\, e^{-\frac{\lambda}{4} (\coth \lambda)(|a|^2+|b|^2)}\, e^{-i\frac{\lambda}{2}( w\cdot \bar{a}- z \cdot \bar{b})}\, e^{\frac{\lambda}{2} (\coth \lambda)( z\cdot \bar{a}+w \cdot \bar{b})} \, F(z-a,w-b).
$$ 
 It turns out that $ \rho_\lambda(a,b) $ is a unitary operator whose adjoint is given by $ \rho_\lambda(a,b)^\ast = \rho_\lambda(-a,-b).$ Moreover, by a direct calculation we can check that
$$ \rho_\lambda(a,b)\,\rho_\lambda(a^\prime,b^\prime) = e^{i\frac{\lambda}{2}\Re( b \cdot \bar{a^\prime}-a\cdot \bar{b^\prime})}\, e^{-i \frac{\lambda}{2} (\coth \lambda) \Im( a\cdot \bar{a^\prime}+b \cdot \bar{b^\prime})} \,
\rho_\lambda(a+a^\prime, b+b^\prime).$$
When $ \lambda = 0,$ it is well known that $ \rho_0(a,b,t) = e^{-it}\, \rho_0(a,b) $ defines an irreducible unitary representation of the Heisenberg group $ \He^{2n} = \C^n \times \C^n \times \R $ equipped with the group law 
$$(a,b,t)(a^\prime,b^\prime,t^\prime) = (a+a^\prime, b+b^\prime, t+t^\prime+\frac{1}{2}\Im( a\cdot \bar{a^\prime}+b \cdot \bar{b^\prime}))$$
on the Fock space $ \mathcal{F}^0(\C^{2n}).$ But for $ \lambda \neq 0,$ it is not possible to choose a homomorphism $ \varphi_\lambda : \R \rightarrow S^1 $ such that $ \rho_\lambda(a,b,t) = \varphi_\lambda(t) \rho_\lambda(a,b) $ is a representation of $ \He^{2n}$ on $ \mathcal{F}^\lambda(\C^{2n}).$\\

However, the composition formula for $ \rho_\lambda(a,b)\,\rho_\lambda(a^\prime,b^\prime)$ suggests that we equip $ \C^n \times \C^n \times \C $ with the group law
\begin{equation}\label{group law} (a,b,t)(a^\prime,b^\prime,t^\prime) = \big(a+a^\prime, b+b^\prime, t+t^\prime-\frac{\lambda}{2}\Re(  b \cdot \bar{a^\prime}-a\cdot \bar{b^\prime})+\frac{\lambda}{2}(\coth \lambda)\Im( a\cdot \bar{a^\prime}+b \cdot \bar{b^\prime})\big).
\end{equation}
We denote this group by $ \He_\lambda^n(\C) $ and note that  $  \He_\lambda^n(\R) = \R^n \times \R^n \times \R $ is a subgroup of $ \He^n_\lambda(\C) $ which is isomorphic to the real Heisenberg group $ \He^n.$ Indeed, when $ (a,b,t) \in \He^n_\lambda(\R)$ the inherited group law from  $\He_\lambda^n(\C) $ is a deformation of the group law on $ \He^n.$ When $ \lambda =0$ we recover the standard Heisenberg group, $ \He^n_0(\C) = \He^{2n},$  and for this reason we would like to call $ \He^n_\lambda(\C) $ the twisted Heisenberg group. We also observe that $ \R^n \times i\R^n \times \C $ is another subgroup which is isomorphic to $ \C^{n+1}.$ Moreover, by defining 
$$ \rho_\lambda(a,b,t) = e^{-it} \, \rho_\lambda(a,b) $$
we immediately see that $ \rho_\lambda $ defines a unitary representation of $ \He_\lambda^n(\C) $ on $ \mathcal{F}^\lambda(\C^{2n}).$

\begin{thm}\label{irrep} For each $ \lambda \neq 0, \rho_\lambda $ is an irreducible unitary representation of  $ \He_\lambda^n(\C) .$ However, its restriction to $ \He_\lambda^n(\R) $ is not irreducible.
\end{thm}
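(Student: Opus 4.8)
The plan is to derive both assertions directly from the explicit formula for $\rho_\la$; the only analytic fact needed is that $\coth^2\la-1=\sinh^{-2}\la\neq0$ for $\la\neq0$, and no deep machinery is required.

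\emph{Irreducibility.} I would argue in three steps. (i) The constant function $\mathbf 1$ is cyclic: from the definition of $\rho_\la$, $\rho_\la(a,b)\mathbf 1(z,w)=C(a,b)\exp\!\big(\tfrac{\la}{2}z\cdot((\coth\la)\bar a+i\bar b)+\tfrac{\la}{2}w\cdot((\coth\la)\bar b-i\bar a)\big)$ with $C(a,b)$ a nonzero scalar; since $\big(\begin{smallmatrix}\coth\la&i\\-i&\coth\la\end{smallmatrix}\big)$ is invertible for $\la\neq0$, as $(a,b)$ varies these functions run, up to scalars, over all exponentials $e^{z\cdot\zeta+w\cdot\eta}$, $(\zeta,\eta)\in\C^{2n}$, and since $w_\la=c_\la e^{-\langle\mathcal H(z,w),(z,w)\rangle}$ for a positive definite Hermitian $\mathcal H$ (using $|\Im(z\cdot\bar w)|\le\tfrac12|(z,w)|^2<\tfrac12(\coth\la)|(z,w)|^2$ off the origin) the space $\mathcal{F}^\la(\C^{2n})$ is an honest weighted Fock space in which those exponentials — being, up to scalars, its reproducing kernels — have dense span. (ii) The ``vacuum'' is one-dimensional: differentiation gives $d\rho_\la(a,b)=-(a\cdot\nabla_z+b\cdot\nabla_w)+\ell_{a,b}$ with $\ell_{a,b}$ a linear function, and a short computation yields $d\rho_\la(a,b)+i\,d\rho_\la(-ia,-ib)=-2(a\cdot\nabla_z+b\cdot\nabla_w)$ — the multiplication terms cancelling because $\ell_{a,b}+i\,\ell_{-ia,-ib}\equiv0$ — so a vector annihilated by this natural complex polarization has zero gradient and is therefore constant. (iii) If $\mathcal H_0$ is a closed invariant subspace with orthogonal projection $P$, then $P$ commutes with all $\rho_\la(a,b)$, and hence on the analytic vectors (among which are $\mathbf 1$ and its $\rho_\la$-orbit) with the operators of step (ii); thus $P\mathbf 1$ and $(I-P)\mathbf 1$ are both killed by the polarization, so each is a scalar multiple of $\mathbf 1$, and being orthogonal one of them vanishes, i.e.\ $\mathbf 1\in\mathcal H_0$ or $\mathbf 1\perp\mathcal H_0$; by (i) this forces $\mathcal H_0=\{0\}$ or $\mathcal H_0=\mathcal{F}^\la(\C^{2n})$, so $\rho_\la$ is irreducible. (As a cross-check, the commutator form of $\He_\la^n(\C)$ is, in real coordinates $a=p+iq,\ b=r+is$, $\tfrac{\la}{2}\big(\begin{smallmatrix}0&cI&-I&0\\-cI&0&0&-I\\I&0&0&cI\\0&I&-cI&0\end{smallmatrix}\big)$ with $c=\coth\la$, of determinant $(\tfrac{\la}{2})^{4n}\sinh^{-4n}\la\neq0$, so that $\rho_\la$ is a Schr\"odinger representation and one could instead invoke Stone--von Neumann.)

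\emph{Reducibility of the restriction.} By the composition formula, $\rho_\la(a',b')$ commutes with $\rho_\la(a,b)$ exactly when $\om\big((a,b),(a',b')\big):=\tfrac{\la}{2}\Re(b\cdot\bar{a'}-a\cdot\bar{b'})-\tfrac{\la}{2}(\coth\la)\Im(a\cdot\bar{a'}+b\cdot\bar{b'})$ vanishes. The operators $\rho_\la(a,b)$ with $(a,b)\in W:=\R^n\times\R^n$, together with the center, generate $\rho_\la|_{\He_\la^n(\R)}$, and the $\om$-annihilator $W^{\perp}$ has real dimension at least $4n-2n=2n$, so it contains nonzero vectors; for any such $(a',b')$ the operator $\rho_\la(a',b')$ is not scalar, because $\rho_\la(a',b')\mathbf 1$ is a nonconstant function by step (i). Hence $\rho_\la|_{\He_\la^n(\R)}$ has a non-scalar operator in its commutant and is therefore reducible (in fact an infinite multiple of the Schr\"odinger representation of $\He^n$, in accordance with $\mathcal{F}^\la(\C^{2n})\cong\mathcal{F}(\C^n)\otimes\mathcal{F}(\C^n)$).

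I expect the only genuinely computational points to be the cancellation $\ell_{a,b}+i\,\ell_{-ia,-ib}\equiv0$ in step (ii) and the careful bookkeeping of the exponential prefactors when computing $\rho_\la(a,b)\mathbf 1$ in step (i) — equivalently, correctly locating inside the twisted weight $w_\la$ the complex polarization responsible for the Fock realization — and that is where the effort will go.
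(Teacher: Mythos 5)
Your argument is correct and takes a genuinely different, essentially self-contained route. The paper proves irreducibility by invoking the commutant characterization from \cite{GT}: a bounded $T$ commuting with the real (resp.\ purely imaginary) translations must be of the form $S_\varphi$ (resp.\ $\widetilde S_\psi$), whence $T1=\varphi=\psi$, and evaluating $S_\varphi=\widetilde S_\varphi$ on the degree-one monomials $\zeta_{j,0},\zeta_{0,j}$ forces $\partial\varphi/\partial z_j=\partial\varphi/\partial w_j=0$. Your steps (ii)--(iii) are the infinitesimal counterpart of exactly that degree-one computation (the operators $d\rho_\lambda(a,b)+i\,d\rho_\lambda(-ia,-ib)$ play the role of the paper's comparison of $S_\varphi\zeta$ with $\widetilde S_\varphi\zeta$), but you apply them to the projection of the vacuum rather than to the symbol $\varphi$, so you never need Theorem \ref{thm:convolution-operator-twisted-fock}; the price is the routine care with domains of the generators and the identification of the Hilbert-space derivative with the pointwise one (harmless here, since norm convergence in a reproducing-kernel space implies local uniform convergence). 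Your step (i) --- cyclicity of $\mathbf 1$ via the invertibility of the block matrix with entries $\coth\lambda,\pm i$ together with the density of the reproducing kernels of the nondegenerate Gaussian weight --- has no analogue in the paper and is needed to close your Schur-type argument. For the reducibility of the restriction the paper again quotes \cite{GT} (closures of ranges of suitable $S_\varphi$ are proper invariant subspaces, classified in Theorem \ref{irred-subspace}), whereas you exhibit explicit non-scalar unitaries $\rho_\lambda(a',b')$ in the commutant by computing the radical of the commutator form on $\R^n\times\R^n$; this is more elementary and even produces a $2n$-real-parameter family in the commutant, though it gives less structural information than the paper's classification via the projections $M_V$. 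One caveat on your parenthetical cross-check: nondegeneracy of the commutator form of $\He^n_\lambda(\C)$ plus Stone--von Neumann only shows that $\rho_\lambda$ is a multiple of the Schr\"odinger representation; the multiplicity-one statement is precisely what your steps (ii)--(iii) supply, so that remark is not a shortcut around them.
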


In a recent work \cite{GT} the authors have characterised all bounded linear operators $ S $ on $ \mathcal{F}^\lambda(\C^{2n}) $ that commute with $ \rho_\lambda(a,b) $ for all $ a, b \in \R^n.$  They have shown that such operators are of the form
\begin{align} \label{def:convolution-operator-tiwsted-fock}
S_\varphi F(z,w) = \int_{\C^{2n}} F(a,b)  \varphi(z-\bar{a}, w-\bar{b}) e^{\frac{1}{2}\lambda (\coth \lambda)(z \cdot \bar{a}+w \cdot \bar{b})} e^{-\frac{i}{2} \lambda (w \cdot \bar{a}- z \cdot \bar{b})} w_\lambda(a,b) \, da \,db  
\end{align}
where $ \varphi \in \mathcal{F}^\lambda(\C^{2n})$  is given by $ \varphi = G_\lambda(M) $ for a bounded linear operator on $ L^2(\R^n).$ Here $ G_\lambda: B(L^2(\R^n)) \rightarrow  \mathcal{F}^\lambda(\C^{2n}) $ is the non-commutative analogue of the Gauss-Bargmann transform  defined by
\begin{align} \label{def:Gauss-Bargmann-transform-tiwsted-fock}
G_\lambda(M)(z,w) = p_1^\lambda(z,w)^{-1}\, \tr\left( \pi_\lambda(-z,-w) e^{-\frac{1}{2} H(\lambda)} M e^{-\frac{1}{2}H(\lambda)} \right) .
\end{align}
Consequently, the ranges of such operators are precisely the subspaces invariant under $ \rho_\lambda(a,b,t), (a,b,t) \in \He^n(\R).$\\

Suppose $ V $ is a closed subspace of $ \mathcal{F}^\lambda(\C^{2n}) $ which is invariant under $ \rho_\lambda(a,b,t), (a,b,t) \in \He^n.$  Then the orthogonal projection $ P_V : \mathcal{F}^\lambda(\C^{2n}) \rightarrow V $ commutes with all $ \rho_\lambda(a,b,t) $ and hence $ P_V = S_{\varphi_V} $ for a unique $ \varphi_V = G_\lambda(M_V) \in \mathcal{F}^\lambda(\C^{2n}).$ It can be shown that the associated  operator $ M_V $ on $ L^2(\R^n) $ is an orthogonal projection.  The problem of identifying all the closed subspaces $ V $ of $ \mathcal{F}^\lambda(\C^{2n}) $ such that $ (V,\rho_\lambda) $ is an irreducible unitary representation of $ \He^n$ is answered in the following result.

\begin{thm}\label{irred-subspace} With notations as above $ (V,\rho_\lambda) $ is irreducible if and only if the projection $ M_V $ is of rank one. Any such $ V $ can be described as $ V = \mathcal{F}^\lambda(\C^{2n}) \ast \varphi $ where $ \varphi = G_\lambda(M_V),$ for a rank one projection $ M_V$ of $ L^2(\R^n).$
\end{thm}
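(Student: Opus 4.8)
The plan is to deduce Theorem~\ref{irred-subspace} from the structure of the commutant of $\rho_\lambda$ restricted to $\He^n_\lambda(\R)\cong\He^n$. First I would isolate the one analytic input needed: the map $M\mapsto S_{G_\lambda(M)}$ is an injective, $\ast$-preserving, (anti-)multiplicative map from $B(L^2(\R^n))$ \emph{onto} the commutant of $\{\rho_\lambda(a,b,t): (a,b,t)\in\He^n_\lambda(\R)\}$ in $B(\mathcal{F}^\lambda(\C^{2n}))$. That it takes values in, and exhausts, this commutant, and that $\varphi_V$ — hence $M_V$ — is unique, is exactly the content quoted above from \cite{GT}; the (anti-)multiplicativity reduces to the associativity of the convolution $S_\varphi F = F\ast\varphi$ in \eqref{def:convolution-operator-tiwsted-fock} together with the fact that the non-commutative Gauss--Bargmann transform \eqref{def:Gauss-Bargmann-transform-tiwsted-fock} carries operator composition on $L^2(\R^n)$, up to the order of the factors, to $\ast$; injectivity of $M\mapsto S_{G_\lambda(M)}$ follows from injectivity of $G_\lambda$ and the fact that $S_\varphi$ determines $\varphi$.

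Granting this, the map restricts to an order isomorphism between the projection lattice of $B(L^2(\R^n))$ and the projection lattice of the commutant (it is irrelevant whether $M\mapsto S_{G_\lambda(M)}$ multiplies in the same or the opposite order, since $P\le Q\iff PQ=P\iff QP=P$ for projections, and $P^\ast=P$ is preserved). Composing with the standard bijection $P\leftrightarrow\operatorname{Ran}P$ between projections in the commutant and closed $\He^n_\lambda(\R)$-invariant subspaces of $\mathcal{F}^\lambda(\C^{2n})$, I get an order isomorphism $V\leftrightarrow M_V$ from the inclusion lattice of such invariant subspaces onto the lattice of orthogonal projections of $L^2(\R^n)$; in particular $U\subseteq V$ forces $M_U\le M_V$, and $V=\{0\}$ corresponds to $M_V=0$.

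The equivalence is then formal. A nonzero closed invariant subspace $V$ gives an irreducible representation $(V,\rho_\lambda)$ of $\He^n$ exactly when it contains no closed invariant subspace strictly between $\{0\}$ and $V$, which under $V\leftrightarrow M_V$ says that $M_V$ has no subprojection $N$ with $0<N<M_V$; for a nonzero projection this means $M_V$ is of rank one. Concretely: if $\operatorname{rank}M_V\ge 2$, a rank-one $N\le M_V$ yields the orthogonal projection $S_{G_\lambda(N)}\le P_V$ whose range is invariant and strictly between $\{0\}$ and $V$, so $(V,\rho_\lambda)$ is reducible; if $M_V=0$ then $V=\{0\}$; and if $\operatorname{rank}M_V=1$ then every invariant $U\subseteq V$ has $M_U\le M_V$, hence $M_U\in\{0,M_V\}$ and $U\in\{\{0\},V\}$. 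For the explicit description, $P_V=S_{\varphi_V}$ with $\varphi_V=G_\lambda(M_V)$ gives $V=\operatorname{Ran}P_V=\operatorname{Ran}S_{\varphi_V}=\mathcal{F}^\lambda(\C^{2n})\ast\varphi_V$ by the very definition of the convolution operator; conversely, for any rank-one projection $M$ on $L^2(\R^n)$ the operator $S_{G_\lambda(M)}$ is an orthogonal projection, its range $V:=\mathcal{F}^\lambda(\C^{2n})\ast G_\lambda(M)$ is a closed invariant subspace, and $M_V=M$ has rank one, so $(V,\rho_\lambda)$ is irreducible.

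The main obstacle, and the only place where a genuine computation rather than abstract operator theory enters, is establishing the $\ast$-algebra properties of $M\mapsto S_{G_\lambda(M)}$ — above all the composition identity for $G_\lambda$ asserting that $G_\lambda(M)\ast G_\lambda(N)$ equals $G_\lambda$ of the product $MN$ (in one order or the other). This requires the twisted-convolution / Weyl-transform calculus underlying \eqref{def:Gauss-Bargmann-transform-tiwsted-fock}; everything downstream — the passage to projection lattices, the rank-one criterion, and the description $V=\mathcal{F}^\lambda(\C^{2n})\ast\varphi$ — is then bookkeeping.
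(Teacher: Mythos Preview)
Your proposal is correct and follows essentially the same route as the paper. Both arguments rest on the correspondence, via $V\mapsto P_V=S_{\varphi_V}\mapsto M_V=G_\lambda^\ast(\varphi_V)$, between closed $\He^n_\lambda(\R)$-invariant subspaces and orthogonal projections on $L^2(\R^n)$, and conclude that irreducibility of $V$ is equivalent to $M_V$ having no nontrivial subprojection, i.e., being rank one. The paper phrases this via direct-sum decompositions $V=W\oplus W'$ corresponding to $M_V=M_W+M_{W'}$ with $M_W M_{W'}=0$, whereas you package the same content as an order isomorphism of projection lattices; these are two ways of saying the same thing, and you are right that the only substantive input is the (anti-)multiplicativity and $\ast$-preservation of $M\mapsto S_{G_\lambda(M)}$, which comes from the results quoted from \cite{GT}.
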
 

\begin{rem} Note that $ G_\lambda^\ast(\varphi) $ is a projection if and only if $ \varphi \ast \varphi = \varphi .$  With such a $ \varphi$ it is easy to see that $ \mathcal{F}^\lambda(\C^{2n}) \ast \varphi $ is a closed subspce. Indeed, if $ F_n \ast \varphi \rightarrow F,$ then $ F \ast \varphi = \lim_{n \rightarrow \infty} F_n \ast \varphi \ast \varphi = \lim_{n \rightarrow \infty} F_n \ast \varphi =F.$
\end{rem}

Here is the plan of the paper. After recalling results about the Gauss-Bargmann transform $ G ,$ we define its non-commutative version $ G_\lambda .$ We introduce the operators $ \rho_\lambda(a,b) $ and state relevant results from \cite{GT} regarding bounded linear operators on the twisted Fock space $ \mathcal{F}^\lambda(\C^{2n}).$ In Section 2, we also introduce the algebra $ \mathcal{A}^\lambda(\C^{2n})$.  We introduce the group $ \He^n_\lambda(\C) $ is Section 3 and prove the irreducibility of the representation $ \rho_\lambda.$

\section{Gauss-Bargmann transforms and  some\\
 algebras of entire functions}

\subsection{Gauss-Bargmann transform- the abelian case:} In the classical euclidean setting the Bargmann transform sets up the unitary equivalence between $ L^2(\R^n) $ and $ \mathcal{F}(\C^n).$  It is convenient to consider the Gauss-Bargmann transform rather than the Bargmann transform. Let $ d\gamma(\xi) = e^{-|\xi|^2} d\xi $ and define $ G $ on $ L^2(\R^n, d\gamma)$ by
$$ Gf(z) = e^{\frac{1}{4}z^2} \, \int_{\R^n} f(\xi)  e^{iz \cdot \xi}\, d\gamma(\xi).$$
Note that $ Gf(z) = Bg(iz) $ where $ g(\xi) = f(\xi)e^{-\frac{1}{2}|\xi|^2}$ and hence $ G : L^2(\R^n, d\gamma) \rightarrow \mathcal{F}(\C^n)$ is unitary. As $ L^\infty(\R^n) \subset L^2(\R^n, d\gamma),$ the image of  $ L^\infty(\R^n) $ under $ G ,$ which we denote by $ \mathcal{A}(\C^n),$ is a subspace of $ \mathcal{F}(\C^n).$  We can define a convolution structure  on this subspace  making  it into a commutative Banach algebra.
\\

As proved in \cite{Thangavelu-arxiv-Fock-Sobolev-2023} every $ \varphi \in \mathcal{A}(\C^n) $ defines a bounded linear operator $ S_\varphi .$ By defining $ F \ast \varphi = S_\varphi F $ it has been proved  that $ G^\ast( F \ast \varphi) = G^\ast(F) \, G^\ast(\varphi)$ 
and $ G^\ast(\varphi) = m \in L^\infty(\R^n).$ This shows that $  F \ast \varphi \in \mathcal{A}(\C^n) $ if  both $ F, \varphi \in \mathcal{A}(\C^n).$ We equip the algebra $ \mathcal{A}(\C^n) $ with the norm  defined by 
$$ \| \varphi \| = \| S_\varphi \|_{\text{op}} = \| G^\ast(\varphi)\|_\infty.$$
For $ \varphi, \psi \in \mathcal{A}(\C^n),$ the function $ \varphi \ast \psi $ corresponds to $ S_\varphi \circ S_\psi$ and hence $ \| \varphi \ast \psi \| \leq \|\varphi\| \, \| \psi\|.$ Thus $ \mathcal{A}(\C^n) $ becomes a Banach algebra. The map $ G : L^\infty(\R^n) \rightarrow \mathcal{A}(\C^n) $ is an algebra isomorphism.\\

The case $ \alpha = 0 $ of  Theorem \ref{hardy} was proved in \cite{Thangavelu-arxiv-Fock-Sobolev-2023}. The general case is proved similarly using a stronger version of Hardy's theorem. We omit the details and move on to proving Theorem \ref{weaker-hardy}. \\

{\bf{Proof of Theorem \ref{weaker-hardy}:}} Under the hypothesis of the theorem we have $ T = S_\varphi $ and $ S = \widetilde{S}_\psi.$ The assumption $ T 1 = S1 $ gives $ \varphi = \psi.$  We claim that $ S_\varphi \zeta_j (z)= (-\frac{\partial}{\partial z_j}+ \frac{1}{2} z_j )\varphi(z) $ where $ \zeta_j(w) = \frac{1}{2} w_j.$  It is enough to prove the claim when $ z =a \in \R^n.$ To see this, we start with the observation that the change of variables $ w \rightarrow a-\bar{w} $ in the definition of $ S_\varphi \zeta_j $ yields
$$ S_\varphi \zeta_j(a) = \frac{1}{2} \int_{\C^n} (a_j-\bar{w}_j) \varphi(w)\,e^{\frac{1}{2}a \cdot \bar{w}}\,e^{-\frac{1}{2}|w|^2}\, dw .$$ 
The first term is equal to $\frac{1}{2} a_j \varphi(a) $ whereas the second term is
$$ - \frac{\partial}{\partial a_j} \int_{\C^n}  \varphi(w)\,e^{\frac{1}{2}a \cdot \bar{w}} e^{-\frac{1}{2}|w|^2}\, dw = -\frac{\partial}{\partial a_j}\varphi(a) $$ 
which proves the claim. From  the relation 
$  \widetilde{S}_\varphi  = U \circ S_{U^\ast \varphi} \circ U^\ast $ we also have 
$$ (U^\ast \circ \widetilde{S}_\varphi ) \zeta_j(z) = i (-\frac{\partial}{\partial z_j}+ \frac{1}{2} z_j )U^\ast \varphi(z) = \frac{\partial \varphi}{\partial z_j}(iz) +  \frac{i}{2} z_j \varphi(iz).$$
Thus $ S_\varphi \zeta_j = \widetilde{S}_\varphi \zeta_j $ gives  $(\frac{\partial}{\partial z_j}+ \frac{1}{2} z_j )\varphi(z) = (-\frac{\partial}{\partial z_j}+ \frac{1}{2} z_j )\varphi (z) $ or $ \frac{\partial \varphi}{\partial z_j} = 0 $ for any $ j = 1,2,...,n.$ Consequently, $ \varphi $ reduces to a constant proving the theorem.

\subsection{Gauss-Bargmann transform- non-abelian case:} From now onwards let us assume $ \lambda \neq 0 $ and consider the twisted Fock space $ \mathcal{F}^\lambda(\C^{2n}) $ defined in the introduction. The role of the Gauss-Bargmann transform is now played by its twisted analogue $ G_\lambda $ whose definition requires some preparation. Let $ \pi_\lambda(x,u) = \pi_\lambda(x,u,0) $ where $ \pi_\lambda $ is the Schr\"odinger representation of the Heisenberg group $ \He^n $ corresponding to the central character $ e^{i\lambda t}.$ The representation $ \pi_\lambda(x,u) $ defined for $ (x,u) \in \R^{2n} $ has a natural extension $ \pi_\lambda(z,w) $ to $ \C^{2n}.$  These operators are no longer bounded but  they are densely defined. Let $ H(\lambda) = -\Delta+\lambda^2 |x|^2 $ be the Hermite operator which is self-adjoint with discrete spectrum. We let $ e^{-tH(\lambda)}, t > 0 $  to stand for the Hermite semigroup generated by $ H(\lambda).$ \\

Let $ \mathcal{S}_2(L^2(\R^n)) $ be the space of all Hilbert-Schmidt operators on $ L^2(\R^n).$ We define the Gauss-Bargmann transform $ \mathcal{G}_\lambda $ on $ \mathcal{S}_2(L^2(\R^n)) $ as follows:
$$ \mathcal{G}_\lambda(T)(z,w) = p_1^\lambda(z,w)^{-1}\, \tr \left( \pi_\lambda(-z,-w)Te^{-\frac{1}{2}H(\lambda)} \right).$$
Here $ p_t^\lambda(z,w) $ is the holomorphic extension of the heat kernel associated to the special Hermite operator given explicitly by 
\begin{align} \label{def:heat-kernel-special-Hermite}
p_t^\lambda(y,v) = (4\pi)^{-n} \left( \frac{\lambda}{ \sinh \lambda t}\right)^n e^{-\frac{1}{4}\lambda (\coth \lambda t)(y^2+v^2)}. 
\end{align}
We refer to \cite{KTX} for more about the heat kernel transform associated to the special Hermite operator and the twisted Bergman spaces. In \cite{GT} it has been shown that $ \mathcal{G}_\lambda $ is a unitary operator from $ \mathcal{S}_2(L^2(\R^n)) $ onto $ \mathcal{F}^\lambda(\C^{2n}).$ For any $ M \in B(L^2(\R^n)),$ the operator $ e^{-\frac{1}{2}H(\lambda)} M \in \mathcal{S}_2(L^2(\R^n))$ and hence  the map $ G_\lambda(M) = \mathcal{G}_\lambda(e^{-\frac{1}{2}H(\lambda)} M) $ takes $ B(L^2(\R^n)) $ into a subspace of $ \mathcal{F}^\lambda(\C^{2n}) $ which we denote by $ \mathcal{A}^\lambda(\C^{2n}).$ As in the abelian case we can define a convolution structure on $ \mathcal{A}^\lambda(\C^{2n})$ which will make it into a non-commutative Banach algebra isomorphic to $ B(L^2(\R^n)).$
\\

We can also define the Gauss-Bargmann transform as a map from $ L^2(\R^{2n}) $ onto $ \mathcal{F}^\lambda(\C^{2n}).$ For this we use the Weyl transfom which takes functions  $ f $ on $ \R^{2n} $ into   operators on $ L^2(\R^n)$ defined by 
$$ \pi_\lambda(f) = \int_{\R^{2n}} f(x,y)\, \pi_\lambda(x,y)\, dx \,dy.$$
It is well known that $ \pi_\lambda $ takes $ L^2(\R^{2n}) $ onto $ \mathcal{S}_2(L^2(\R^n)).$ The Plancherel theorem for the Weyl transform  reads as 
$$ (2\pi)^{-n} |\lambda|^{n} \, \int_{\R^{2n}} |f(x,y)|^2 \, dx\,dy =  \|\pi_\lambda(f)\|_{HS}^2.$$ 
We note in passing the interesting relation  $ \pi_\lambda(p_t^\lambda) = e^{-tH(\lambda)}.$ Thus for a suitable constant the map $ \widetilde{\mathcal{G}}_\lambda = d_\lambda \, \mathcal{G}_\lambda \circ \pi_\lambda $ is a unitary operator between $ L^2(\R^{2n}) $ and $ \mathcal{F}^\lambda(\C^{2n}).$ We will make use of this relation in defining the operators $\rho_\lambda.$ For the results mentioned here and for more on Weyl transform we refer to \cite{Book-Thangavelu-uncertainty}.

\subsection{The operators $ \rho_\lambda(a,b)$:} The convolution on the Heisenberg group gives rise to a family of convolutions  for functions on $ \C^n.$ These are known as $\lambda$-twisted convolutions (or simply twisted convolutions) and defined by
$$ f \ast_\lambda g(x,u) = \int_{\R^{2n}} f(x-a, u-b) g(a,b) \,e^{i \frac{\lambda}{2}(u \cdot a-x \cdot b)}\, da\, db.$$
By defining the twisted translation $ \tau_\lambda(a,b) $ of  functions  on $ \R^{2n} $ by
\begin{align} \label{def:twisted-translation} 
\tau_\lambda(a,b)g(x,u) = g(x-a,u-b)\,e^{-i\frac{\lambda}{2}(u\cdot a- x\cdot b)} 
\end{align}
we can write the twisted convolution as 
$$ f \ast_\lambda g(x,u) = \int_{\R^{2n}} f(a, b)\,  \tau_\lambda(a,b)g(x,u)\, da\, db.$$
For easy reference we record some relations between  the two families of unitary operators $ \tau_\lambda(a,b) $ and $ \pi_\lambda(x,u).$ First of all we have
$$ \pi_\lambda(a,b)\pi_\lambda(x,u) = \pi_\lambda(x+a,u+b) e^{-i\frac{\lambda}{2}(u\cdot a- x\cdot b)}.$$
Recalling the definition of $ \pi_\lambda(f) $ another easy calculation shows that
$$ \pi_\lambda(a,b) \pi_\lambda(f) = \int_{\R^{2n}}  f(x,u) \pi_\lambda(x+a,u+b) e^{-i\frac{\lambda}{2}(u\cdot a- x\cdot b)} dx\,du .$$
In other words, the twisted translation satisfies the relation $ \pi_\lambda(a,b) \pi_\lambda(f) = \pi_\lambda( \tau_\lambda(a,b)f) .$ This translates into the property
$ \tau_\lambda(a,b) ( f\ast_\lambda g) = \tau_\lambda(a,b)f \ast_\lambda g $
for  the $\lambda$-twisted translation of $ f $ with $ g .$ \\

By conjugating the unitary operators $ \tau_\lambda(a,b) $ with $ \widetilde{\mathcal{G}}_\lambda, $  we obtain the unitary operators
$$ \widetilde{\rho}_\lambda(a,b) = \widetilde{\mathcal{G}}_\lambda  \circ \tau_\lambda(a,b) \circ \widetilde{\mathcal{G}}_\lambda^\ast  $$
acting on $ \mathcal{F}^\lambda(\C^{2n}).$ If $ F = \mathcal{G}_\lambda(T) $ with $ T = \pi_\lambda(f), f \in L^2(\R^{2n}) $ we see that
$$ \widetilde{\rho}_\lambda(a,b)F(z,w) = e^{\frac{1}{4}\lambda (\coth \lambda)(z^2+w^2)}\, \tr \left( \pi_\lambda(-z,-w) \pi_\lambda(a,b) Te^{-\frac{1}{2}H(\lambda)} \right).$$
In the  above calculation we have used the relation $ \pi_\lambda(a,b) \pi_\lambda(f) = \pi_\lambda( \tau_\lambda(a,b)f) .$ After simplification we obtain the explicit formula
$$ \widetilde{\rho}_\lambda(a,b)F(z,w) = e^{-\frac{\lambda}{4} (\coth \lambda)(|a|^2+|b|^2)}\, e^{-i\frac{\lambda}{2}( w\cdot a- z \cdot b)}\, e^{\frac{\lambda}{2} (\coth \lambda)( z\cdot a+w \cdot b)} \, F(z-a,w-b).$$
Though we have started with $ (a,b) \in \R^{2n} $ we can extend the definition of $ \widetilde{\rho}_\lambda(a,b)  $ for  $ (a,b) \in \C^{2n} $ by setting
\begin{equation}\label{rep-one}
\widetilde{\rho}_\lambda(a,b)F(z,w) = e^{-\frac{\lambda}{4} (\coth \lambda)(|a|^2+|b|^2)}\, e^{-i\frac{\lambda}{2}( w\cdot \bar{a}- z \cdot \bar{b})}\, e^{\frac{\lambda}{2} (\coth \lambda)( z\cdot \bar{a}+w \cdot \bar{b})} \, F(z-a,w-b).
\end{equation} 
By slightly modifying  the representation $ \widetilde{\rho}_\lambda  $ we define
$ \rho_\lambda(a,b) = e^{\frac{\lambda}{2} \Im (a\cdot \bar{b})}\, \widetilde{\rho}_\lambda(a,b) .$ Note that for $ (a,b) \in \R^{2n}, \rho_\lambda(a,b) = \widetilde{\rho}_\lambda(a,b)$ and $\rho_\lambda(ia,ib) = \widetilde{\rho}_\lambda(ia,ib).$ 

\subsection{Operators commuting with $ \rho_\lambda(a,b)$:}

 In an earlier work with Rahul Garg \cite{GT} we studied bounded linear operators on the twisted Fock spaces $\mathcal{F}^\lambda(\C^{2n})$ that commute with $ \rho_\lambda(a,b) $ for all $ (a,b) \in \R^{2n}.$  They turned out to be operators of the form
\begin{align} \label{def:convolution-operator-tiwsted-fock}
S_\varphi F(z,w) = \int_{\C^{2n}} F(a,b)  \varphi(z-\bar{a}, w-\bar{b}) e^{\frac{1}{2}\lambda (\coth \lambda)(z \cdot \bar{a}+w \cdot \bar{b})} e^{-\frac{i}{2} \lambda (w \cdot \bar{a}- z \cdot \bar{b})} w_\lambda(a,b) \, da \,db  
\end{align}
for some $ \varphi \in  \mathcal{F}^\lambda(\C^{2n}).$  These are densely defined operators and in \cite{GT} a necessary and sufficient on $ \varphi $ was found for them to be bounded.  The condition involves  the non-abelian analogue of the Gauss-Bargmann transform $ G $  given by the map $ G_\lambda: B(L^2(\R^n)) \rightarrow  \mathcal{F}^\lambda(\C^{2n}) $ 
\begin{align} \label{def:Gauss-Bargmann-transform-tiwsted-fock}
G_\lambda(M)(z,w) = p_1^\lambda(z,w)^{-1}\, \tr\left( \pi_\lambda(-z,-w) e^{-\frac{1}{2} H(\lambda)} M e^{-\frac{1}{2}H(\lambda)} \right) .
\end{align}

\begin{thm} \label{thm:convolution-operator-twisted-fock}
For any $ \varphi \in \mathcal{F}^\lambda(\C^{2n}) $ consider the operator $ S_\varphi $ defined as in \eqref{def:convolution-operator-tiwsted-fock}. Then it is bounded on $\mathcal{F}^\lambda(\C^{2n}) $ if and only if $ \varphi = G_\lambda(M)$ for some $ M \in B(L^2(\R^n)).$
\end{thm}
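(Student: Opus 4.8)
The plan is to transport the question from the Fock space to the Hilbert--Schmidt ideal $ \mathcal{S}_2(L^2(\R^n)) $ via the unitary $ \mathcal{G}_\lambda $, under which it should become the elementary fact that \emph{right} multiplication $ T \mapsto TM $ is bounded on $ \mathcal{S}_2(L^2(\R^n)) $ precisely when $ M \in B(L^2(\R^n)) $, with equal norms.

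The first and decisive step is to compute $ \mathcal{G}_\lambda^{-1} \circ S_\varphi \circ \mathcal{G}_\lambda $ for the operator $ S_\varphi $ of \eqref{def:convolution-operator-tiwsted-fock}. Writing $ \varphi = \mathcal{G}_\lambda(T_\varphi) $ with $ T_\varphi \in \mathcal{S}_2(L^2(\R^n)) $, I would work first with $ F = \widetilde{\mathcal{G}}_\lambda(f) $ for $ f \in L^2(\R^{2n}) $, and use $ e^{-\frac12 H(\lambda)} = \pi_\lambda(p_{1/2}^\lambda) $, the homomorphism property $ \pi_\lambda(f \ast_\lambda g) = \pi_\lambda(f)\pi_\lambda(g) $, and the relation $ \tau_\lambda(a,b)(f \ast_\lambda g) = (\tau_\lambda(a,b)f) \ast_\lambda g $. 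Inserting the explicit formulas \eqref{def:Gauss-Bargmann-transform-tiwsted-fock}, the weight $ w_\lambda $, and the holomorphic extension of $ \pi_\lambda(z,w) $ to $ \C^{2n} $, a kernel computation should identify the conjugated operator as \emph{right multiplication} by the a priori closed, densely defined operator $ M := e^{\frac12 H(\lambda)}T_\varphi $, i.e.
\[
 \big(\mathcal{G}_\lambda^{-1} \circ S_\varphi \circ \mathcal{G}_\lambda\big)(T) \;=\; TM ,
\]
on the dense set $ \{\, \mathcal{G}_\lambda(Ne^{-\frac12 H(\lambda)}) : N \in B(L^2(\R^n))\,\} $, where it equals $ NT_\varphi \in \mathcal{S}_2(L^2(\R^n)) $. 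This is consistent with $ S_\varphi $ being densely defined, and reflects that $ S_\varphi $ commutes with the images under $ \mathcal{G}_\lambda $ of the twisted translations, namely the left multiplications $ T \mapsto \pi_\lambda(a,b)T $. Note that $ \varphi = G_\lambda(M) $ for a \emph{bounded} $ M $ holds precisely when $ T_\varphi = e^{-\frac12 H(\lambda)}M $.

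Granting this identification, both implications are soft. If $ \varphi = G_\lambda(M) $ with $ M \in B(L^2(\R^n)) $, then $ T_\varphi = e^{-\frac12 H(\lambda)}M $ and $ \|TM\|_{HS} \le \|T\|_{HS}\,\|M\| $ for every $ T \in \mathcal{S}_2(L^2(\R^n)) $, so by unitarity of $ \mathcal{G}_\lambda $ the operator $ S_\varphi $ extends boundedly to $ \mathcal{F}^\lambda(\C^{2n}) $ with $ \|S_\varphi\| \le \|M\| $. Conversely, if $ S_\varphi $ is bounded, then $ T \mapsto TM $ is bounded on the dense set above and hence extends to a bounded map on all of $ \mathcal{S}_2(L^2(\R^n)) $; testing against rank-one operators $ \langle\,\cdot\,,\phi\rangle\,\psi $ then shows that $ M $ extends to an operator $ \bar M \in B(L^2(\R^n)) $ with $ \|\bar M\| \le \|S_\varphi\| $ and that $ T_\varphi = e^{-\frac12 H(\lambda)}\bar M $, whence $ \varphi = \mathcal{G}_\lambda(T_\varphi) = G_\lambda(\bar M) $, as claimed.

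I expect the main obstacle to be the kernel identification in the second paragraph: one must verify that the explicit kernel in \eqref{def:convolution-operator-tiwsted-fock} is exactly the one produced by conjugating right multiplication through $ \mathcal{G}_\lambda $, while keeping careful track of domains --- $ \pi_\lambda(z,w) $ for complex $ (z,w) $, the semigroup factors $ e^{\pm\frac12 H(\lambda)} $, and the multiplier $ M $ are all unbounded --- and carrying along the scalar prefactor distinguishing $ \rho_\lambda $ from $ \widetilde{\rho}_\lambda $. Everything after that is routine functional analysis; this is in substance the content of \cite{GT}, to which we refer for the full details.
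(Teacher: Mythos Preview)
Your proposal is correct and matches the approach. The paper does not actually prove this theorem; it is quoted from \cite{GT}, and your sketch---conjugating $S_\varphi$ by the unitary $\mathcal{G}_\lambda$ to obtain right multiplication $T\mapsto TM$ on $\mathcal{S}_2(L^2(\R^n))$---is precisely what Corollary~\ref{cor:convolution-operator-twisted-fock-via-Gauss-Bargmann-transform} records as the outcome of that argument, and you yourself defer the kernel verification to \cite{GT}.
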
 

As a corollary to the above theorem the authors obtained  the following representation theorem for the operators $ S_\varphi.$ Let $ \mathcal{G}_\lambda^\ast $ denote the adjoint of $ \mathcal{G}_\lambda.$

\begin{cor} \label{cor:convolution-operator-twisted-fock-via-Gauss-Bargmann-transform} 
For any bounded linear operator $ S_\varphi $ on  $\mathcal{F}^\lambda(\C^{2n})$ let $ M $ be the operator associated to $ \varphi $ as in Theorem \ref{thm:convolution-operator-twisted-fock}. Then
$ S_\varphi F = \mathcal{G}_\lambda \left( \mathcal{G}_\lambda^\ast F  \circ M\right)$ for any $ F \in \mathcal{F}^\lambda(\C^{2n}).$ 
\end{cor}

Along with the operators $ S_\varphi$  the authors in \cite{GT}  also considered  operators of the form $ \widetilde{S}_\varphi $ on  $\mathcal{F}^\lambda(\C^{2n})$  defined by
\begin{align} \label{def:tilde-convolution-operator-tiwsted-fock}
\widetilde{S}_\varphi F(z,w) = \int_{\C^{2n}} F(a,b)  \varphi(z+\bar{a}, w+\bar{b}) e^{\frac{1}{2}\lambda (\coth \lambda)(z \cdot \bar{a}+w \cdot \bar{b})} e^{-\frac{i}{2} \lambda (w \cdot \bar{a}- z \cdot \bar{b})} w_\lambda(a,b) \, da \,db . 
\end{align} 
As in the classical setting, these two classes of operators are related via the unitary operator $ U $ on $ \mathcal{F}^\lambda(\C^{2n})$ defined by $ UF(z,w) = F(-iz,-iw) .$  Indeed, it is easy to see that 
\begin{align} \label{relation:two-convolutions-operators} 
U^\ast \circ S_\varphi \circ U = \widetilde{S}_{U^\ast \varphi}. 
\end{align} 
Since the operators $ S_\varphi$ commute with $ \rho_\lambda(a,b)$ for all $ (a, b)\in \R^{2n},$  in view of the relation \eqref{relation:two-convolutions-operators}, we infer that the operators $ \widetilde{S}_\varphi $ commute with $ U^\ast \circ \rho_\lambda(a,b) \circ U$ for all $ (a,b)\in \R^{2n}.$ An easy calculation shows that for $ (a, b) \in \R^{2n},  U^\ast \circ \rho_\lambda(a,b) \circ U = \rho_\lambda(-ia,-ib) $ and hence $ \widetilde{S}_\varphi $ commutes with $\rho_\lambda(ia,ib)$ for all $ (a, b) \in \R^{2n}.$ \\

As in the classical setting (see \cite{Thangavelu-arxiv-Fock-Sobolev-2023}), there is an uncertainty regarding the simultaneous boundedness of $ S_\varphi $ and $ \widetilde{S}_\varphi.$ 

\begin{thm} \label{thm:uncertainty-twisted-fock-spaces}
For any non constant $ \varphi \in \mathcal{F}^\lambda (\C^{2n})$ at least one of the operators $ S_\varphi $ and $ \widetilde{S}_\varphi $ fails to be bounded on $ \mathcal{F}^\lambda.$
\end{thm}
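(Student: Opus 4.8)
The plan is to imitate the Euclidean argument of \cite{Thangavelu-arxiv-Fock-Sobolev-2023}, in which the corresponding statement for $\mathcal{F}(\C^n)$ was deduced from Hardy's uncertainty theorem for the Fourier transform; here the role of Hardy's theorem will be played by its non-commutative analogue for the Weyl transform (equivalently, for the special Hermite semigroup $e^{-tH(\lambda)}$). Argue by contradiction: suppose $\varphi\in\mathcal{F}^\lambda(\C^{2n})$ is non-constant while both $S_\varphi$ and $\widetilde{S}_\varphi$ are bounded. By Theorem \ref{thm:convolution-operator-twisted-fock}, $\varphi=G_\lambda(M)$ for some $M\in B(L^2(\R^n))$. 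By the intertwining identity \eqref{relation:two-convolutions-operators}, boundedness of $\widetilde{S}_\varphi$ is equivalent to that of $S_{U\varphi}$, so Theorem \ref{thm:convolution-operator-twisted-fock} also yields $U\varphi=G_\lambda(N)$ for some $N\in B(L^2(\R^n))$. A short computation using $\pi_\lambda(p_t^\lambda)=e^{-tH(\lambda)}$ and the Weyl inversion formula shows that $G_\lambda(cI)$ is constant and, conversely, that $\varphi$ constant forces $M$ to be scalar; hence it suffices to prove that $M$ is a scalar multiple of the identity.

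\textbf{Transfer to the operator side.} Set $T:=\mathcal{G}_\lambda^\ast\varphi\in\mathcal{S}_2(L^2(\R^n))$. Since $G_\lambda(M)=\mathcal{G}_\lambda(e^{-\frac12 H(\lambda)}M)$ and $\mathcal{G}_\lambda$ is unitary, the first condition reads $T=e^{-\frac12 H(\lambda)}M$, i.e. the a priori densely defined operator $e^{\frac12 H(\lambda)}T$ extends to the bounded operator $M$. For the second condition I would compute the unitary $\mathcal{V}:=\mathcal{G}_\lambda^\ast\circ U\circ\mathcal{G}_\lambda$ on $\mathcal{S}_2(L^2(\R^n))$. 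Because $U$ conjugates $\rho_\lambda(a,b)$ into $\rho_\lambda(-ia,-ib)$ for real $(a,b)$ — equivalently, because $\widetilde{\rho}_\lambda(a,b)=\widetilde{\mathcal{G}}_\lambda\circ\tau_\lambda(a,b)\circ\widetilde{\mathcal{G}}_\lambda^\ast$ and $U$ swaps $\lambda$-twisted translations with the $\mathcal{G}_\lambda$-conjugates of the operators $\widetilde{\rho}_\lambda(-ia,-ib)$ — the operator $\mathcal{V}$ turns out to be, up to a unimodular constant, an operator-level Fourier transform (on the Weyl-symbol side, $\widetilde{\mathcal{G}}_\lambda^\ast U\widetilde{\mathcal{G}}_\lambda$ is the symplectic Fourier transform of $\R^{2n}$). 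Then $U\varphi=G_\lambda(N)$ becomes $\mathcal{V}T=e^{-\frac12 H(\lambda)}N$, i.e. $e^{\frac12 H(\lambda)}\mathcal{V}T$ is bounded as well.

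\textbf{Applying Hardy's theorem.} We have thus reached: a Hilbert--Schmidt operator $T$ with $e^{\frac12 H(\lambda)}T$ and $e^{\frac12 H(\lambda)}\mathcal{V}T$ both bounded, $\mathcal{V}$ being the operator Fourier transform. This is precisely the hypothesis of Hardy's uncertainty theorem for the Weyl transform; alternatively one passes to Weyl symbols and invokes Hardy's theorem on $\R^{2n}$, using that twisted convolution with a Gaussian bounds the growth on the symbol side. The exponent $\tfrac12$ sits exactly on the Hardy-critical line, so the critical (equality) case of the theorem applies and forces $T=c\,e^{-\frac12 H(\lambda)}$, i.e. $M=cI$. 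Therefore $\varphi=G_\lambda(cI)$ is constant, contradicting the assumption, and the theorem follows.

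\textbf{Main obstacle.} The two steps requiring genuine work are: (i) identifying $\mathcal{V}=\mathcal{G}_\lambda^\ast U\mathcal{G}_\lambda$ explicitly as an operator Fourier transform, which rests on the analytic continuation of $\pi_\lambda$ and on carefully unwinding the several Gaussian prefactors in the definitions of $\mathcal{G}_\lambda$, $\widetilde{\mathcal{G}}_\lambda$ and $\rho_\lambda$; and (ii) checking that the resulting pair of Gaussian-decay exponents lands exactly on the Hardy-critical line, so that the conclusion is ``$T$ is a multiple of $e^{-\frac12 H(\lambda)}$'' rather than the useless ``$T$ ranges over an infinite family'' or the false ``$T=0$''. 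Both are bookkeeping-heavy, but, in analogy with the Euclidean case of \cite{Thangavelu-arxiv-Fock-Sobolev-2023}, should go through.
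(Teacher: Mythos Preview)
Your proposal is correct and follows the same route the paper indicates: the paper does not prove Theorem~\ref{thm:uncertainty-twisted-fock-spaces} in-text but defers to \cite{GT}, stating explicitly that the result is obtained ``by means of Hardy's theorem for the Weyl transform,'' and reformulates it as the assertion that $\varphi\in\mathcal{A}^\lambda(\C^{2n})$ and $U\varphi\in\mathcal{A}^\lambda(\C^{2n})$ together force $\varphi$ constant---exactly your setup $\varphi=G_\lambda(M)$, $U\varphi=G_\lambda(N)$. Your identification of the two genuine computational obstacles (the explicit form of $\mathcal{G}_\lambda^\ast U\mathcal{G}_\lambda$ and the landing on the Hardy-critical exponent) is accurate and matches what the cited argument in \cite{GT} has to carry out.
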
 

The analogue of this theorem in the classical setting was proved by appealing to Hardy's theorem for the Fourier transform on $ \R^n,$ see \cite{Thangavelu-arxiv-Fock-Sobolev-2023} for the proof.  In a similar vein, the above theorem was proved in \cite{GT} by means of Hardy's theorem for the Weyl transform. 
As we have $ U^\ast \circ S_{U \varphi} \circ U = \widetilde{S}_{ \varphi},$ the above theorem can be viewed as the following statement  about the subspace $ \mathcal{A}^\lambda(\C^{2n}) = G_\lambda(B(L^2(\R^n))) \subset \mathcal{F}^\lambda(\C^{2n}) $ under the action of $ U$: if $ \varphi \in \mathcal{A}^\lambda(\C^{2n}),$ then $ U\varphi $ never belongs to $ \mathcal{A}^\lambda(\C^{2n}) $ unless $\varphi$  is a constant.

\subsection{An algebra of entire functions} 
Let $ \mathcal{A^\lambda}(\C^{2n}) $ stand for the subspace of $ \mathcal{F}^\lambda(\C^{2n}) $ consisting of those  $ \varphi $ for which $ S_\varphi $ is bounded. In view of Theorem \ref{thm:convolution-operator-twisted-fock} we know that $ \varphi \in \mathcal{A^\lambda}(\C^{2n}) $ if and only if $ \varphi = G_\lambda (M)$ for some $ M \in B(L^2(\R^n)).$ Thus $\mathcal{A^\lambda}(\C^{2n}) $ is the image of $ B(L^2(\R^n)) $ under $ G_\lambda $ which turns out to be a Banach algebra under a suitable convolution. \\

Let us write $ M = G_\lambda^\ast(\varphi) $ if $ \varphi = G_\lambda(M) $ and note that $ \mathcal{G}_\lambda^\ast(\varphi) = e^{-\frac{1}{2}H(\lambda)} G_\lambda^\ast(\varphi).$ We can rewrite the result of Corollary \ref{cor:convolution-operator-twisted-fock-via-Gauss-Bargmann-transform}  
 as $ \mathcal{G}_\lambda^\ast ( F \ast_\lambda \varphi) = \mathcal{G}_\lambda^\ast (F) \, G_\lambda^\ast(\varphi) .$  It follows that if $ F$ also comes from $ \mathcal{A}^\lambda(\C^{2n}) $ we have the relation 
 $ G_\lambda^\ast(S_\varphi F) = G_\lambda^\ast(F)\, G_\lambda^\ast(\varphi).$ This suggests that by defining  the convolution  $ F \ast_\lambda  \varphi = S_\varphi F,$ we can make the subspace $ \mathcal{A^\lambda}(\C^{2n}) $ into  a non-commutative  algebra. We equip $ \mathcal{A}^\lambda(\C^{2n})$ with the operator norm: $ \| \varphi \| = \|S_\varphi \|_{\text {op}}.$

\begin{prop} For any $ \lambda \neq 0, \, \mathcal{A}^\lambda(\C^{2n}) $ is a non-commutative Banach algebra which is isometrically isomorphic to $ B(L^2(\R^n)).$
\end{prop}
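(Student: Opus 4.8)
The strategy is to transport the Banach algebra structure of $B(L^2(\R^n))$ to $\mathcal{A}^\lambda(\C^{2n})$ along the map $G_\lambda$, using the identity $S_\varphi F = \mathcal{G}_\lambda(\mathcal{G}_\lambda^\ast F \circ M)$ of Corollary \ref{cor:convolution-operator-twisted-fock-via-Gauss-Bargmann-transform} and the fact that $\mathcal{G}_\lambda \colon \mathcal{S}_2(L^2(\R^n)) \to \mathcal{F}^\lambda(\C^{2n})$ is unitary. The first point is that $G_\lambda \colon B(L^2(\R^n)) \to \mathcal{A}^\lambda(\C^{2n})$ is a linear bijection. Surjectivity is Theorem \ref{thm:convolution-operator-twisted-fock} together with the definition of $\mathcal{A}^\lambda(\C^{2n})$; injectivity follows from the factorisation $G_\lambda(M) = \mathcal{G}_\lambda(e^{-\frac12 H(\lambda)} M)$, since $\mathcal{G}_\lambda$ is unitary and $e^{-\frac12 H(\lambda)}$ is injective (being positive self-adjoint with trivial kernel): $G_\lambda(M) = 0$ forces $e^{-\frac12 H(\lambda)} Mf = 0$, hence $Mf = 0$, for every $f$, i.e. $M = 0$. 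Thus the inverse $G_\lambda^\ast = G_\lambda^{-1}$ is well defined, matching the notation $M = G_\lambda^\ast(\varphi)$ fixed above.

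Next I would transport the product. For $\varphi, \psi \in \mathcal{A}^\lambda(\C^{2n})$, applying the relation $G_\lambda^\ast(S_\varphi F) = G_\lambda^\ast(F) G_\lambda^\ast(\varphi)$, which holds for $F \in \mathcal{A}^\lambda(\C^{2n})$, with $F = \psi$ gives
$$ G_\lambda^\ast(\psi \ast_\lambda \varphi) = G_\lambda^\ast(S_\varphi \psi) = G_\lambda^\ast(\psi)\, G_\lambda^\ast(\varphi). $$
In particular $\psi \ast_\lambda \varphi = G_\lambda(G_\lambda^\ast(\psi) G_\lambda^\ast(\varphi)) \in \mathcal{A}^\lambda(\C^{2n})$, so $\ast_\lambda$ closes in $\mathcal{A}^\lambda(\C^{2n})$, and the displayed identity states exactly that $G_\lambda^\ast$, hence $G_\lambda$, carries $\ast_\lambda$ to operator composition; associativity and bilinearity of $\ast_\lambda$ are then inherited from $B(L^2(\R^n))$. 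Since $L^2(\R^n)$ is infinite dimensional, $B(L^2(\R^n))$, and with it $\mathcal{A}^\lambda(\C^{2n})$, is non-commutative.

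For the norm, write $M = G_\lambda^\ast(\varphi)$ and combine $S_\varphi F = \mathcal{G}_\lambda(\mathcal{G}_\lambda^\ast F \circ M)$ with the unitarity of $\mathcal{G}_\lambda$ to obtain
$$ \|\varphi\| = \|S_\varphi\|_{\mathrm{op}} = \sup_{\|F\|_{\mathcal{F}^\lambda} = 1} \|\mathcal{G}_\lambda^\ast F \circ M\|_{HS} = \sup\{\|AM\|_{HS} : A \in \mathcal{S}_2(L^2(\R^n)),\ \|A\|_{HS} = 1\}. $$
This last supremum equals $\|M\|_{\mathrm{op}}$: the bound $\le$ is the Hilbert--Schmidt ideal inequality $\|AM\|_{HS} \le \|A\|_{HS}\|M\|_{\mathrm{op}}$, while $\ge$ comes from testing the rank-one operators $A = |\xi\rangle\langle\eta|$ with $\|\xi\| = \|\eta\| = 1$, for which $AM = |\xi\rangle\langle M^\ast\eta|$ and hence $\|AM\|_{HS} = \|M^\ast\eta\|$, whose supremum over unit vectors $\eta$ is $\|M^\ast\|_{\mathrm{op}} = \|M\|_{\mathrm{op}}$. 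Therefore $\|\varphi\| = \|G_\lambda^\ast(\varphi)\|_{\mathrm{op}}$, so $G_\lambda$ is isometric; consequently $\mathcal{A}^\lambda(\C^{2n})$ inherits completeness from $B(L^2(\R^n))$, the submultiplicativity $\|\varphi \ast_\lambda \psi\| = \|G_\lambda^\ast(\varphi) G_\lambda^\ast(\psi)\|_{\mathrm{op}} \le \|\varphi\|\,\|\psi\|$ holds, and $\mathcal{A}^\lambda(\C^{2n})$ is a non-commutative Banach algebra isometrically isomorphic to $B(L^2(\R^n))$.

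I expect the only step that is not routine bookkeeping to be the norm identity $\sup\{\|AM\|_{HS} : \|A\|_{HS} = 1\} = \|M\|_{\mathrm{op}}$ — the statement that right multiplication by $M$ on the Hilbert--Schmidt ideal has norm equal to the operator norm of $M$. Everything else reduces to the already established Theorem \ref{thm:convolution-operator-twisted-fock} and Corollary \ref{cor:convolution-operator-twisted-fock-via-Gauss-Bargmann-transform}, so no new estimates on the densely defined operators $\pi_\lambda(z,w)$ or $S_\varphi$ are required.
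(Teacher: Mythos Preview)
Your proof is correct and follows essentially the same route as the paper: both use the multiplicativity $G_\lambda^\ast(\psi \ast_\lambda \varphi) = G_\lambda^\ast(\psi)\,G_\lambda^\ast(\varphi)$ derived from Corollary~\ref{cor:convolution-operator-twisted-fock-via-Gauss-Bargmann-transform}, identify $\|\varphi\|$ with $\|G_\lambda^\ast(\varphi)\|_{\mathrm{op}}$, and then pull back completeness and submultiplicativity from $B(L^2(\R^n))$. The paper's version simply asserts that $G_\lambda$ is an isometric isomorphism, whereas you supply the explicit justification via the Hilbert--Schmidt norm identity $\sup_{\|A\|_{HS}=1}\|AM\|_{HS}=\|M\|_{\mathrm{op}}$; this is a welcome addition but not a different approach.
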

\begin{proof}As $ \varphi \ast_\lambda \psi $ corresponds to the operator $ S_\psi \circ S_\varphi $ it follows that  $ \| \varphi \ast_\lambda \psi \| \leq  \|\varphi\| \| \psi \|.$ The map $ G_\lambda :B(L^2(\R^n)) \rightarrow \mathcal{A}^\lambda(\C^{2n})$ is an isometric isomorphism. Hence the completeness of $ \mathcal{A}^\lambda(\C^{2n}) $ follows from that of $ B(L^2(\R^n)).$
\end{proof}
 
 \begin{rem} Inside the algebra $ \mathcal{A}^\lambda(\C^{2n}) $ there is a commutative subalgebra which is easy to describe. For each bounded function $ m $ on $ \R $ let us consider the bounded operator 
$$ m(H(\lambda)) = \sum_{k=0}^\infty m((2k+n)|\lambda|) \, P_k(\lambda)$$ 
where $ P_k(\lambda) $ are the spectral projections associated to the Hermite operator $ H(\lambda).$ If we define
$$ \mathcal{A}_0^\lambda(\C^{2n}) = \{ \varphi \in \mathcal{A}^\lambda: G_\lambda^\ast(\varphi) =m(H(\lambda)) \} $$
it follows that it is a commutative subalgebra. Moreover, every $ \varphi \in \mathcal{A}_0^\lambda(\C^{2n}) $ is given by the formula
$$ \varphi(z,w) = (2\pi)^{-n} |\lambda|^n \sum_{k=0}^\infty m((2k+n)|\lambda|) \, e^{-2t(2k+n)|\lambda|}\, \varphi_{k,\lambda}^{n-1}(z,w).$$
Here $\varphi_{k,\lambda}^{n-1}(z,w)$ is the holomorphic extension of the Laguerre function
$$\varphi_{k,\lambda}^{n-1}(x,u) = L_k^{n-1}(\frac{1}{2}|\lambda|(|x|^2+u|^2))\, e^{-\frac{1}{4}|\lambda|(|x|^2+|u|^2)}.$$ It would be interesting to study some properties of this subalgebra such as identifying the Gelfand spectrum etc.
\end{rem}

\section{The twisted Heisenberg group $ \He_\lambda^n(\C)$ and \\
the representation $ \rho_\lambda$}  

\subsection{The group $ \He_\lambda^n(\C)$:}  We begin with the definition of $ \rho_\lambda(a,b)$: by slightly modifying 
the operators $ \widetilde{\rho}_\lambda(a,b)  $ we define
$$ \rho_\lambda(a,b)F(z,w) = e^{\frac{\lambda}{2} \Im (a\cdot \bar{b})}\, \widetilde{\rho}_\lambda(a,b)F(z,w) .$$  We record the following properties of $ \rho_\lambda (a,b)$ which can be verified by direct calculation.

\begin{prop} For any $ (a, b), (a^\prime, b^\prime)  \in \C^{2n},$  we have the composition law
$$ \rho_\lambda(a,b)\,\rho_\lambda(a^\prime,b^\prime) = e^{-i\frac{\lambda}{2}\Re( a\cdot \bar{b^\prime}- b \cdot \bar{a^\prime})}\, e^{-i \frac{\lambda}{2} (\coth \lambda) \Im( a\cdot \bar{a^\prime}+b \cdot \bar{b^\prime})} \,
\rho_\lambda(a+a^\prime, b+b^\prime).$$
Moreover, $ \rho_\lambda(a,b)^\ast = \rho_\lambda(-a,-b) $ and hence the operators $ \rho_\lambda(a,b) $ are unitary on $ \mathcal{F}^\lambda(\C^{2n}).$\\
\end{prop}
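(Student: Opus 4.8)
The plan is to derive the composition law by a direct computation with the explicit formula for $\rho_\lambda(a,b)$, after which the adjoint formula and the unitarity follow formally. Write $\mu=\coth\lambda$ and recall that $\rho_\lambda(a,b)=e^{\frac{\lambda}{2}\Im(a\cdot\bar b)}\,\widetilde\rho_\lambda(a,b)$, where $\widetilde\rho_\lambda(a,b)F(z,w)=e^{-\frac{\lambda}{4}\mu(|a|^2+|b|^2)}\,e^{-i\frac{\lambda}{2}(w\cdot\bar a-z\cdot\bar b)}\,e^{\frac{\lambda}{2}\mu(z\cdot\bar a+w\cdot\bar b)}\,F(z-a,w-b)$. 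First I would compute $\widetilde\rho_\lambda(a,b)\widetilde\rho_\lambda(a',b')F(z,w)$: applying the two operators in succession yields $F\big(z-(a+a'),\,w-(b+b')\big)$ multiplied by a product of exponentials, and I would expand the resulting exponent and split it into the part linear in $z$, the part linear in $w$, and the constant part. Using $\overline{a+a'}=\bar a+\bar a'$ (and the same in $b$), the $z$-linear and $w$-linear parts coincide at once with the corresponding parts of the prefactor of $\widetilde\rho_\lambda(a+a',b+b')F(z,w)$. For the constant part one uses $|a+a'|^2=|a|^2+|a'|^2+2\Re(a\cdot\bar{a'})$ (and the same in $b$) to extract the constant part of the prefactor of $\widetilde\rho_\lambda(a+a',b+b')$, leaving the scalar $e^{-i\frac{\lambda}{2}(a\cdot\bar{b'}-b\cdot\bar{a'})}\,e^{-\frac{\lambda}{2}\mu[(a\cdot\bar{a'}+b\cdot\bar{b'})-\Re(a\cdot\bar{a'}+b\cdot\bar{b'})]}$; since $\zeta-\Re\zeta=i\,\Im\zeta$, the second factor equals $e^{-i\frac{\lambda}{2}\mu\,\Im(a\cdot\bar{a'}+b\cdot\bar{b'})}$. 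This establishes the composition law for $\widetilde\rho_\lambda$.

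To pass to $\rho_\lambda$ I multiply through by $e^{\frac{\lambda}{2}(\Im(a\cdot\bar b)+\Im(a'\cdot\bar{b'}))}$ and divide by $e^{\frac{\lambda}{2}\Im((a+a')\cdot\overline{(b+b')})}$; expanding $\Im((a+a')\cdot\overline{(b+b')})=\Im(a\cdot\bar b)+\Im(a'\cdot\bar{b'})+\Im(a\cdot\bar{b'}+a'\cdot\bar b)$, the scalar in front of $\rho_\lambda(a+a',b+b')$ becomes $e^{-\frac{\lambda}{2}\Im(a\cdot\bar{b'}+a'\cdot\bar b)}\,e^{-i\frac{\lambda}{2}(a\cdot\bar{b'}-b\cdot\bar{a'})}\,e^{-i\frac{\lambda}{2}\mu\,\Im(a\cdot\bar{a'}+b\cdot\bar{b'})}$. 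The one point worth checking carefully is that the first two factors collapse to the single unimodular factor $e^{-i\frac{\lambda}{2}\Re(a\cdot\bar{b'}-b\cdot\bar{a'})}$: writing $p=a\cdot\bar{b'}$ and $q=a'\cdot\bar b$, so that $b\cdot\bar{a'}=\bar q$, the exponent $-\frac{\lambda}{2}\Im(p+q)-i\frac{\lambda}{2}(p-\bar q)$ has vanishing real part and imaginary part $-\frac{\lambda}{2}\Re(p-q)=-\frac{\lambda}{2}\Re(a\cdot\bar{b'}-b\cdot\bar{a'})$. This is precisely the asserted composition law.

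Everything else follows formally. Taking $(a',b')=(-a,-b)$, the identity $b\cdot\bar a=\overline{a\cdot\bar b}$ gives $\Re\big(a\cdot\overline{(-b)}-b\cdot\overline{(-a)}\big)=0$ while $\Im\big(a\cdot\overline{(-a)}+b\cdot\overline{(-b)}\big)=\Im(-|a|^2-|b|^2)=0$, so $\rho_\lambda(a,b)\rho_\lambda(-a,-b)=\rho_\lambda(0,0)=I$ and, symmetrically, $\rho_\lambda(-a,-b)\rho_\lambda(a,b)=I$; hence $\rho_\lambda(-a,-b)=\rho_\lambda(a,b)^{-1}$. For unitarity, note that for $(a,b)\in\R^{2n}$ one has $\rho_\lambda(a,b)=\widetilde\rho_\lambda(a,b)=\widetilde{\mathcal G}_\lambda\circ\tau_\lambda(a,b)\circ\widetilde{\mathcal G}_\lambda^\ast$, which is unitary as a conjugate of the unitary twisted translation, and for $(a,b)\in i\R^{2n}$ the operator $\rho_\lambda(a,b)$ is unitary because $U^\ast\rho_\lambda(a,b)U=\rho_\lambda(-ia,-ib)$ with $U$ unitary; decomposing a general $(a,b)$ into its real and imaginary parts and applying the composition law exhibits $\rho_\lambda(a,b)$ as a unimodular scalar times a product of two such unitaries, hence unitary. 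Therefore $\rho_\lambda(a,b)^\ast=\rho_\lambda(a,b)^{-1}=\rho_\lambda(-a,-b)$. Alternatively, the adjoint formula can be obtained directly by substituting $(z,w)\mapsto(z+a,w+b)$ in $\langle\rho_\lambda(a,b)F,G\rangle_{\mathcal F^\lambda}$ and using the translation rule $w_\lambda(z+a,w+b)=w_\lambda(z,w)\,e^{\lambda\Im(z\cdot\bar b+a\cdot\bar w+a\cdot\bar b)}\,e^{-\lambda\mu\Re(z\cdot\bar a+w\cdot\bar b)}\,e^{-\frac{\lambda}{2}\mu(|a|^2+|b|^2)}$, after which a short computation identifies the result with $\langle F,\rho_\lambda(-a,-b)G\rangle_{\mathcal F^\lambda}$. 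The only real obstacle throughout is keeping the signs straight in the exponential bookkeeping of the composition law and in the passage from $\widetilde\rho_\lambda$ to $\rho_\lambda$; there is no conceptual difficulty.
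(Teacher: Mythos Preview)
Your derivation of the composition law is correct and follows exactly the paper's route: compute the product rule for $\widetilde\rho_\lambda$ by expanding the exponentials and then pass to $\rho_\lambda$ by absorbing the $e^{\frac{\lambda}{2}\Im(a\cdot\bar b)}$ factors. Your careful check that the non-unimodular-looking piece collapses to $e^{-i\frac{\lambda}{2}\Re(a\cdot\bar{b'}-b\cdot\bar{a'})}$ is the one place where a sign slip is easy, and you handled it cleanly.

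The only genuine difference from the paper is in how you reach unitarity and the adjoint formula. The paper first proves $\rho_\lambda(a,b)^\ast=\rho_\lambda(-a,-b)$ directly, by translating $(z,w)\mapsto(z+a,w+b)$ in the inner product and verifying the identity $\Theta_{(a,b)}(z+a,w+b)=\overline{\Theta_{(-a,-b)}(z,w)}$ for the kernel $\Theta_{(a,b)}(z,w)=e^{\frac{\lambda}{2}\mu(z\cdot\bar a+w\cdot\bar b)}e^{-\frac{i\lambda}{2}(w\cdot\bar a-z\cdot\bar b)}w_\lambda(z,w)$; unitarity then follows from the composition law. You instead deduce $\rho_\lambda(-a,-b)=\rho_\lambda(a,b)^{-1}$ from the composition law and obtain unitarity by writing $\rho_\lambda(a,b)$ as a unimodular scalar times $\rho_\lambda(\Re a,\Re b)\,\rho_\lambda(i\Im a,i\Im b)$, each factor being unitary by conjugation with $\widetilde{\mathcal G}_\lambda$ or with $U$. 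Your route avoids the explicit $\Theta$-computation at the cost of importing the $U$-conjugation relation and the unitarity of $\widetilde{\mathcal G}_\lambda$; the paper's route is self-contained but requires one more bookkeeping identity. Both are valid, and you in fact sketch the paper's alternative at the end.
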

\begin{proof} It is easy to see that for any $ (a,b) \in \C^{2n}, \widetilde{\rho}_\lambda(a,b) F \in \mathcal{F}^\lambda(\C^{2n}) $ whenever $ F \in \mathcal{F}^\lambda(\C^{2n}).$ Indeed by direct calculation we can verify that 
$$ \int_{\C^{2n}} |\widetilde{\rho}_\lambda(a,b)F(z,w)|^2 \, w_\lambda(z,w)\, dz\, dw = e^{-\lambda \Im(a \cdot \bar{b})} \int_{\C^{2n}} |F(z,w)|^2 w_\lambda(z,w) dz\, dw.$$ In order to find the adjoint,  let us try to prove $ \langle \rho_\lambda(a,b)F, G \rangle =  \langle F, \rho_\lambda(-a,-b) G \rangle $  for   $ F, G \in \mathcal{F}^\lambda(\C^{2n}).$ By defining 
$$ \Theta_{(a,b)}(z,w) =  e^{\frac{1}{2}\lambda (\coth \lambda)(z \cdot \bar{a}+w \cdot \bar{b})} e^{-\frac{i}{2} \lambda (w \cdot \bar{a}- z \cdot \bar{b})} w_\lambda(z,w) $$
we observe that $ \langle \rho_\lambda(a,b)F, G \rangle $ is given by the integral
$$  e^{\frac{1}{2}\Im(a \cdot \bar{b})} e^{-\frac{1}{2}\lambda (\coth \lambda)(|a|^2+|b|^2)} \int_{\C^{2n}} F(z-a,w-b) \Theta_{(a,b)}(z,w)\, \overline{G(z,w)} \,dz \,dw.$$ Our claim will be proved by showing that 
$\Theta_{(a,b)}(z+a,w+b) = \overline{\Theta_{(-a,-b)}(z,w)}.$ Recalling the definition of $ w_\lambda(z,w) $ we calculate that
$$ \Theta_{(a,b)}(z+a,w+b) = e^{-\frac{i}{2} \lambda (w \cdot \bar{a}- z \cdot \bar{b})} \, e^{\lambda \Im(z \cdot \bar{b}-w \cdot \bar{a})} e^{-\frac{1}{2}\lambda (\coth \lambda)(a \cdot \bar{z}+b \cdot \bar{w})}w_\lambda(z,w). $$
The first two factors on the right hand side combine to yield $ e^{-\frac{i}{2}\lambda( \bar{w}\cdot a -\bar{z}\cdot b)}$ which proves the required property of $ \Theta.$
By a simple calculation, it is easy to very that
$$ \widetilde{\rho}_\lambda(a,b)\,\widetilde{\rho}_\lambda(a^\prime,b^\prime) = e^{-i\frac{\lambda}{2}( a\cdot \bar{b^\prime}- b \cdot \bar{a^\prime})}\, e^{-i \frac{\lambda}{2} (\coth \lambda) \Im( a\cdot \bar{a^\prime}+b \cdot \bar{b^\prime})} \,
\widetilde{\rho}_\lambda(a+a^\prime, b+b^\prime).$$
Since $ \rho_\lambda(a,b) = e^{\frac{\lambda}{2} \Im (a\cdot \bar{b})}\, \widetilde{\rho}_\lambda(a,b)$ the stated composition rule for $ \rho_\lambda(a,b) $ follows from that of $ \widetilde{\rho}_\lambda(a,b).$  Finally,  from the composition rule we observe that $ \rho_\lambda(a,b)^\ast \rho_\lambda(a,b) F = F $ and hence $ \rho_\lambda(a,b) $ are unitary.
\end{proof}

The product rule for $ \rho_\lambda(a,b) $ suggests that we equip $ \C^n \times \C^n \times \R $ with the group law (\ref{group law}):
$$(a,b,t)(a^\prime,b^\prime,t^\prime) = (a+a^\prime, b+b^\prime, t+t^\prime-\frac{\lambda}{2}\Re(  b \cdot \bar{a^\prime}-a\cdot \bar{b^\prime})+\frac{\lambda}{2}(\coth \lambda)\Im( a\cdot \bar{a^\prime}+b \cdot \bar{b^\prime})).$$
We denote this group by $ \He_\lambda^n(\C) $ and note that $ \He_\lambda^n = \He_\lambda^n(\R) $ is a subgroup of $ \He_\lambda^n(\C).$  Another subgroup, which is isomorphic to $ \He_\lambda^n $ is given by $ \He_\lambda^n(i\R).$ We extend the the definition of $ \rho_\lambda $ to $ \He_\lambda^n(\C) $ by setting
\begin{equation} \rho_\lambda(a,b,t) = e^{-it} \, \, \rho_\lambda(a,b) .
\end{equation}
It is then easy to see that $ \rho_\lambda $ is a unitary representation of $ \He_\lambda^n(\C) $ on the twisted Fock space.

\subsection{The irreducibility of $ \rho_\lambda$:} By using the results stated in Section 2.4 and proved in \cite{GT}, we can easily see that  the representation $ \rho_\lambda $ of $ \He_\lambda^n(\C) $ is  irreducible. Indeed, if a bounded linear operator $ T $ on $ \mathcal{F}^\lambda(\C^{2n})$ commutes with $ \rho_\lambda(g) $ for all $ g \in \He_\lambda^n(\C)$ then it will commute with $ \rho_\lambda(a,b) $ for all $ (a,b) \in \R^{2n} $ and hence  $ T = S_\varphi $ defined in \ref{def:convolution-operator-tiwsted-fock} for some $ \varphi $ satisfying the condition given in Theorem \ref{thm:convolution-operator-twisted-fock}. From Corollary \ref{cor:convolution-operator-twisted-fock-via-Gauss-Bargmann-transform} we infer that $ T 1= S_\varphi 1 = \varphi.$ As $ T $ commutes with $ \rho_\lambda(ia,ib), (a,b) \in \R^{2n}$ we also know that $ T = \widetilde{S}_\psi.$ In view of \ref{relation:two-convolutions-operators} we also infer that $ T 1 = \widetilde{S}_\psi 1 = \psi.$ And hence $ \varphi = \psi $ and both operators $ S_\varphi $ and $ \widetilde{S}_\varphi $ are bounded. But then by Theorem \ref{thm:uncertainty-twisted-fock-spaces} this is possible only if $ \varphi $ is a constant which means $ T = c I$ proving the irreducibility of $ \rho.$\\

But the above proof depends on Hardy's theorem for the Weyl transform which is much stronger than the irreducibility of $ \rho_\lambda.$ In what follows we give a direct proof following the same line of arguments  used in proving Theorem \ref{thm:uncertainty-twisted-fock-spaces}\\

\noindent{\bf{Proof of Theorem \ref{irrep}:}} As above assuming that  a bounded linear operator $ T $ commutes with all $ \rho_\lambda(g) $ we conclude that there exists $ \varphi = G_\lambda(M) $ such that $ S_\varphi = T = \widetilde{S}_\varphi.$
Recall that
$$S_\varphi F(z,w) = \int_{\C^{2n}} F(a,b)  \varphi(z-\bar{a}, w-\bar{b}) e^{\frac{1}{2}\lambda (\coth \lambda)(z \cdot \bar{a}+w \cdot \bar{b})} e^{-\frac{i}{2} \lambda (w \cdot \bar{a}- z \cdot \bar{b})} w_\lambda(a,b) \, da \,db . $$
If we assume that $ z, w \in \R^n, $ then we can make a change of variables to verify that
$$S_\varphi F(z,w) = \int_{\C^{2n}} F(z-\bar{a}, w-\bar{b}) \varphi(a,b)\, e^{\frac{1}{2}\lambda (\coth \lambda)(z \cdot \bar{a}+w \cdot \bar{b})} e^{\frac{i}{2} \lambda (w \cdot \bar{a}- z \cdot \bar{b})} w_{-\lambda}(a,b) \, da \,db . $$
Since we already know that $ S_\varphi 1 = \varphi,$ the above equation shows that
\begin{equation}\label{reproduce}
 \varphi(z,w) = \int_{\C^{2n}}  \varphi(a,b)\, e^{\frac{1}{2}\lambda (\coth \lambda)(z \cdot \bar{a}+w \cdot \bar{b})} e^{\frac{i}{2} \lambda (w \cdot \bar{a}- z \cdot \bar{b})} w_{-\lambda}(a,b) \, da \,db  
 \end{equation}
which is valid for all $ z, w \in \C^n $ by analytic continuation. For $ j=1,2,...,n $ let us define 
$$ \zeta_{j,0}(z,w) = \frac{1}{2}\lambda \left( (\coth \lambda) z_j -i w_j \right),\,\,\,  \zeta_{0,j}(z,w) = \frac{1}{2}\lambda \left( (\coth \lambda) w_j +i z_j \right). $$
From (\ref{reproduce}) we obtain the following relations:
$$ S_\varphi \zeta_{j,0} = -\frac{\partial \varphi}{\partial z_j}+\zeta_{j,0} \, \varphi,\,\, S_\varphi \zeta_{0,j} = -\frac{\partial \varphi}{\partial w_j}+\zeta_{0,j}\, \varphi .$$
Using the relation $ U^\ast \circ S_\varphi \circ U = \widetilde{S}_{U^\ast \varphi}$ we also have
$$ \widetilde{S}_\varphi \zeta_{j,0} = \frac{\partial \varphi}{\partial z_j}+\zeta_{j,0} \, \varphi,\,\, \widetilde{S}_\varphi \zeta_{0,j} = \frac{\partial \varphi}{\partial w_j}+\zeta_{0,j}\, \varphi .$$
Since $ S_\varphi  = \widetilde{S}_\varphi,$ from the above we conclude that $\frac{\partial \varphi}{\partial z_j}=\frac{\partial \varphi}{\partial w_j} =0 $ for all $j $ and hence $ \varphi$ reduces to a constant.
This proves that $ \rho_\lambda $ is irreducible.

\begin{rem} Note that the above proof does not require the full power of the equality $ S_\varphi  = \widetilde{S}_\varphi $ but uses only the fact that these two operators agree with all monomials of degree one.
\end{rem}

\subsection{The restriction of $ \rho_\lambda $ to $ \He_\lambda^n$:} Though $ \rho_\lambda $ is an irreducible unitary representation of $ \He_\lambda^n(\C) ,$ it turns out that its restriction to $ \He_\lambda^n(\R) $ is not irreducible. Note that for $ (a,b,t) \in \He_\lambda^n(\R), \rho_\lambda(a,b,t) = e^{-i t} \rho_\lambda(a,b).$ From Theorem \ref{thm:convolution-operator-twisted-fock} we know that operators of the form $ S_\varphi $ with $ \varphi = G_\lambda(M), M \in B(L^2(\R^n)) $ commute with $ \rho_\lambda(a,b)$ for all $ a, b \in \R^n.$ Consequently range of such operators are invariant under $ \rho_\lambda(g), g \in \He^n_\lambda(\R).$ It is possible to choose $ \varphi $ so that the closure of the range of $ S_\varphi$ is a proper subspace of $ \mathcal{F}^\lambda(\C^{2n}) $ and hence $ \rho_\lambda $ is not irreducible.\\

In what follows we try to characterise closed subspaces of $ \mathcal{F}^\lambda(\C^{2n}) $ on which $ \rho_\lambda $ becomes an irreducible unitary representation of $ \He^n.$ Let $ V \subset \mathcal{F}^\lambda(\C^{2n}) $ be a closed subspace invariant under $ \rho_\lambda(g), g \in \He^n.$ Then the orthogonal projection $ P_V :\mathcal{F}^\lambda(\C^{2n}) \rightarrow V $ is a bounded linear operator which commutes with $ \rho_\lambda(a,b), a, b \in \R^n.$ Then there exists a unique $ \varphi_V \in \mathcal{A}^\lambda(\C^{2n}) $ and $ M_V \in B(L^2(\R^n))$ such that $ \varphi_V = G_\lambda(M_V) $ and $ S_{\varphi_V}= P_V.$ As $ P_V^2 = P_V,$ it follows that $ \varphi_V \ast_\lambda \varphi_V = \varphi_V $ and hence $ M_V= M_V^2 $ is an orthogonal projection of $L^2(\R^n).$ \\

\noindent{\bf{Proof of Theorem \ref{irred-subspace}:}} Let $  V $ be a closed subspace of $ \mathcal{F}^\lambda(\C^{2n}) $ which supports the representation $ \rho_\lambda.$  Let $ V = W \oplus W^\prime $ be the direct sum of two closed subspaces both invariant under $ \rho_\lambda(g), g \in \He^n.$ As $ P_V = P_W \oplus P_{W^\prime} $ it follows that $ \varphi_V = \varphi_W + \varphi_{W^\prime} $ and $ \varphi_W \ast_\lambda  \varphi_{W^\prime} = 0 .$ This in turn implies that $ M_V = M_W + M_{W^\prime} $ and $ M_W \circ M_{W^\prime} =0.$ 
As a consequence we infer that $ V $ has a proper invariant subspace if and only of the projection $ M_V $ associated to $ V $ has a decomposition of the form $ M_V = M_W \oplus M_{W^\prime}.$ This proves the theorem.

\begin{rem} From the above result we see that if  the projection $ M_V $ associated to $ V $ is of rank $ k $ then we can decompose $ V = \oplus_{j=1}^k V_j $ and each $ (V_j,\rho_\lambda) $ is irreducible. Given $ M \in B(L^2(\R^n)) $ with a spectral decomposition $ M = \sum_{j=0}^\infty \lambda_j\, E_j $ where the spectral projections are of finite rank, say $ d_j.$  For example, any compact normal operator $ M $ provides such an example. Then  with $ \varphi_j =G_\lambda(E_j) $ the subspaces $ V_j = \mathcal{F}^\lambda(\C^{2n}) \ast_\lambda \varphi_j ,$ the range of $ S_{\varphi_j} ,$ supports a primary representation which can be further decomposed  into finitely many irreducible subspaces.
\end{rem}



\section*{Acknowledgements}
This work was carried out at the University of Queensland, Brisbane. He wishes to thank the Department of Mathematics for the hospitality and  Prof. Dietmar Oelz for the invitation. He also wishes to thank Rahul Garg for several useful discussions on the subject matter of this article. This work was supported in parts by DST (J. C. Bose Fellowship) and INSA \\

\providecommand{\bysame}{\leavevmode\hbox to3em{\hrulefill}\thinspace}
\providecommand{\MR}{\relax\ifhmode\unskip\space\fi MR }
\providecommand{\MRhref}[2]{%
  \href{http://www.ams.org/mathscinet-getitem?mr=#1}{#2}
}
\providecommand{\href}[2]{#2}


\begin{thebibliography}{10}

\bibitem{B} V. Bargmann, {On a Hilbert space of analytic functions and an associated integral transform. Part I.} \emph{Comm. Pure Appl. Math.} \textbf{14} (1961), 187-214.

\bibitem{BD} S. Bais and V. N. Dogga, {$\mathcal{L}$-invariant radial singular integral operators on Fock spaces}, \emph{J. Pseudo-Differ. Oper. Appl.} \textbf{14} (2023), no. 1, Paper No. 11, 35 pp.


\bibitem{CHLS}
G. Cao, L.~He, J.~Li, and M. Shen, \emph{Boundedness criterion for
  integral operators on the fractional {F}ock-{S}obolev spaces}, Math. Z.
  \textbf{301} (2022), no.~4, 3671--3693. \MR{4449725}

\bibitem{CLSWY}
G. Cao, J.~Li, M. Shen, B. D. Wick, and L. Yan, \emph{A
  boundedness criterion for singular integral operators of convolution type on
  the {F}ock space}, Adv. Math. \textbf{363} (2020), 107001, 33. \MR{4053517}

\bibitem{Book-Folland-phase-space}
G.~B. Folland, \emph{Harmonic analysis in phase space}, Annals of
  Mathematics Studies, vol. 122, Princeton University Press, Princeton, NJ,
  1989. \MR{983366}
  
  \bibitem{GT}
 R. Garg and S. Thangavelu, \emph{Boundedness of certain linear operators on twisted Fock spaces} arXiv:2306.14188 (2023)
  


\bibitem{H}  G. H. Hardy, {A theorem concerning Fourier transforms,}  \emph{J. London Math. Soc.} \textbf{8} (1933), 227-231.

\bibitem{KTX}
B. Kr\"{o}tz, S. Thangavelu, and Y. Xu, \emph{The heat kernel
  transform for the {H}eisenberg group}, J. Funct. Anal. \textbf{225} (2005),
  no.~2, 301--336. \MR{2152501}



\bibitem{Thangavelu-Hermite-Laguerre-Expansions-Book}
S. Thangavelu, \emph{Lectures on {H}ermite and {L}aguerre expansions},
  Mathematical Notes, vol.~42, Princeton University Press, Princeton, NJ, 1993,
  With a preface by Robert S. Strichartz. \MR{1215939}

\bibitem{Book-Thangavelu-uncertainty}
\bysame, \emph{An introduction to the uncertainty principle}, Progress in
  Mathematics, vol. 217, Birkh\"{a}user Boston, Inc., Boston, MA, 2004, Hardy's
  theorem on Lie groups, With a foreword by Gerald B. Folland. \MR{2008480}


\bibitem{Thangavelu-arxiv-Fock-Sobolev-2023}
\bysame, \emph{Fourier multipliers and pseudo-differential operators on
  {F}ock-{S}obolev spaces}, https://arxiv.org/abs/2304.01087 (2023).
  
  \bibitem{WW} B. D. Wick and  S. K. Wu, {Integral operators on Fock-Sobolev spaces via multipliers on Gauss-Sobolev spaces}, \emph{Integr. Equ. Oper. Theory} \textbf{94} (2022), no. 2, Paper No. 22, 24 pp.
  
  \bibitem{Z} K. Zhu, {Singular integral operators on the Fock space}, \emph{Integr. Equ. Oper. Theory} \textbf{81} (2015), 451-454.

\end{thebibliography}
\end{document}